\newtheorem{theorem}{Theorem}[section]
\newtheorem{corollary}[theorem]{Corollary}
\newtheorem{proposition}[theorem]{Proposition}
\newtheorem{fact}[theorem]{Fact}
\theoremstyle{definition}
\newtheorem{definition}[theorem]{Definition}
\newtheorem{remark}[theorem]{Remark}
\newtheorem*{xPA}{Problem I}
\newtheorem*{xPB}{Problem II}
\newtheorem*{xMR}{Main Result A}
\newtheorem*{thmB}{Theorem B}
\newtheorem*{thmC}{Theorem C}
\newenvironment{proof*}[1]
  {%
   \begin{proof}}
  {\end{proof}}
\def\Arg{\text{{\rm Arg }}}
\def\dfrac{\displaystyle\frac}
\def\pmod#1{\left( \mbox{\rm mod~} #1\right)}
\def\Z{\mathbb{Z}}
\numberwithin{equation}{section}
\begin{document}

\baselineskip=17pt

\title{Explicit rank-one constructions for irrational rotations}

\author{Alexandre I. Danilenko \\
B.  Verkin Institute for Low Temperature Physics \& Engineering\\
 Ukrainian National Academy of Sciences\\
47 Nauky Ave.\\
 61164,  Kharkiv, UKRAINE\\
E-mail: alexandre.danilenko@gmail.com
\and 
Mykyta I. Vieprik\\
V. N. Karazin Kharkiv National University\\
4 Svobody sq.\\
 61022, Kharkiv, UKRAINE\\
 E-mail: nikita.veprik@gmail.com}

\date{}

\maketitle

\renewcommand{\thefootnote}{}

\footnote{2020 \emph{Mathematics Subject Classification}: Primary 37A05; Secondary 37A20, 37A40.}

\footnote{\emph{Key words and phrases}: Irrational rotation, transformation of rank one.}

\renewcommand{\thefootnote}{\arabic{footnote}}
\setcounter{footnote}{0}

\begin{abstract}
For each {\it well approximable} irrational $\theta$, we provide an explicit  rank-one construction of the $e^{2\pi i\theta}$-rotation $R_\theta$ on the circle $\Bbb T$.
This solves ``almost surely'' a problem by del Junco. 
For {\it every} irrational $\theta$, we construct explicitly a rank-one transformation with an eigenvalue 
$e^{2\pi i\theta}$.
For every irrational $\theta$, two infinite $\sigma$-finite invariant measures $\mu_\theta$ and $\mu_{\theta}'$ on $\Bbb T$ are constructed explicitly such that $(\Bbb T,\mu_\theta, R_\theta)$ is {\it rigid} and of rank one and $(\Bbb T,\mu_\theta', R_\theta)$ is of {\it zero type} and of rank one.
The centralizer of the latter system consists of  just  the powers of $R_\theta$.
Some versions of  the aforementioned results are proved under an extra condition on boundedness of the sequence of cuts in the rank-one construction.
\end{abstract}

\section{Introduction}

By a dynamical system we mean a quadruple $(X,\frak B,\mu, T)$, where $(X,\frak B)$
is a standard Borel space, $\mu$ is a $\sigma$-finite measure on $\frak B$ and $T$ is an invertible $\mu$-preserving transformation of $X$.
The dynamical system (or just $T$) is called  of {\it  rank one} if there is a sequence of finite $T$-Rokhlin towers that approximates the subring  of subsets of finite measure in $\frak B$.
There is an alternative   (explicit) definition of rank one system via an inductive construc\-tion
 process of cutting-and-stacking with a single tower on each step.
It is completely  determined by two underlying sequences of {\it cuts} and  {\it spacer mappings}.
For details and for the equivalence of various definitions of rank-one we refer to \cite{Fe}.

Let $\theta\in(0,1)$ be an irrational number and let $\lambda:=e^{2\pi i\theta}\in\Bbb T$.
Denote by $R_\lambda$ the $\lambda$-rotation on the circle $\Bbb T$.
Del Junco showed in  \cite{dJ2} that $R_\lambda$ is of rank one if $\Bbb T$ is furnished with the Haar measure.
He also raised a related  (more subtle) problem in \cite{dJ1}:

\begin{xPA}\label{prA}
Given $\theta$, provide an {\it explicit} construction (i.e. find  sequences of cuts and spacer mappings) of a rank-one transformation which is isomorphic to $R_\lambda$.
\end{xPA}

A solution of Problem~I for an uncountable subset of well approximable
 irrationals of zero Lebesgue measure was found recently in \cite{Dr--Si}.
 
We  consider also a weak version of Problem~I.

\begin{xPB}\label{prB}
Given $\theta$, provide an {\it explicit} construction  of a rank-one probability preserving transformation $T$
 which  has an eigenvalue $\lambda$.
\end{xPB}

We recall that a number $\lambda\in\Bbb T$ is an eigenvalue of $(X,\frak B,\mu,T)$ if there is a Borel function $f:X\to\Bbb T$ such that $f\circ T=\lambda f$ almost everywhere. 
This implies that $R_\alpha$ is a factor of $T$.
Hence $R_\alpha$ is isomorphic to $T$ if and only if $f$ is one-to-one.
Del Junco   solved Problem~II for a.e. $\theta\in(0,1)$ in \cite{dJ1}.

We now state the main results of the first part  (related to the probability preserving systems) of the present paper.

\begin{xMR}
\begin{itemize}
\item  {\it Problem II is solved for every $\theta$.
\item  Problem I is solved for each  well approximable  $\theta$.
\item For almost all $\theta\in(0,1)$, including the badly approximable reals and the algebraic numbers,
we solve Problem~II in the subclass of rank-one transformations with only two cuts at every step of their inductive construction.}
 \end{itemize}
\end{xMR}

As the subset of well approximable reals from $(0,1)$ is of Lebesgue measure 1,
  Problem~I is solved  ``almost surely''.

  In connection with the third point of Main Result A,  we note that Problem~II (and hence Problem~I)
 can not be solved for any irrational $\theta$ in the subclass of rank-one transformations {\it with bounded parameters\footnote{This means that the number of cuts and the total number of spacers added on the $n$-th step of the construction are both uniformly bounded in $n$.}}, as the eigenvalues of
every such transformation are of finite order (see \cite[Theorem~3]{El--Ru} or
 \cite[Theorem~M]{Da5}).
 We also provide a short alternative proof of this fact.

In the second part of the paper we consider Problem~I  within the class of  the infinite measure preserving dynamical systems.
The main difference from the probability preserving case is that for each irrational $\theta$, there exist {\it uncountably many} mutually disjoint $R_\lambda$-invariant infinite $\sigma$-finite measures on $\Bbb T$ (see, e.g., \cite{Sc}).
Infinite measure preserving {\it rank-one} rotations on $\Bbb T$ were under study  in a recent paper \cite{Dr--Si}.
Construction of the rank-one systems there is based on the same cutting-and-stacking algorithm as in \cite{dJ1} 
but without the spacer growth restriction (to obtain infinite measure). 
For a.e. $\theta\in(0,1)$, an infinite measure $m_\theta$ on $\Bbb T$  was constructed  in \cite{Dr--Si} such that 
the system $(\Bbb T,m_\theta, R_\lambda)$ is  of rank-one with explicit cutting-and-stacking parameters.
Moreover, it was shown that the system is rigid if and only if $\theta$ is well approximable. 
We generalize and sharpen those results in the following two theorems.

\begin{thmB} \label{t1}
Let 
$\lambda\in\Bbb T$ be of infinite order.
Then  there is an infinite $\sigma$-finite  $R_\lambda$-invariant non-atomic Borel measure $\mu_\lambda$ on $\Bbb T$ such that
\begin{itemize}
\item
the dynamical system $(\Bbb T,\mu_\lambda,R_\lambda)$ is  of rank one, 
the parameters of the underlying cutting-and-stacking  construction  are explicitly described,
\item
the number of cuts on each step of the  construction is $2$ and
\item
$(\Bbb T,\mu_\lambda,R_\lambda)$ is  rigid, hence the centralizer $C(R_\lambda)$ of $R_\lambda$ is uncountable.
\end{itemize}
\end{thmB}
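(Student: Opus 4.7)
The plan is to build the measure $\mu_\lambda$ together with the rank-one tower structure directly on $\T$, driven by the continued fraction expansion $\theta = [a_1, a_2, \ldots]$ (where $\lambda = e^{2\pi i\theta}$). Let $p_n/q_n$ be the convergents and $\beta_n := q_n \theta - p_n$, so that $|\beta_n| < 1/q_{n+1}$ and $\beta_n \to 0$ rapidly. First I would select a fast-growing subsequence of indices $n_1 < n_2 < \cdots$ so that the tower heights $h_k := q_{n_k}$ satisfy $h_{k+1} \ge 2 h_k$ (in fact $h_{k+1}$ will be taken much larger, to force infinite total measure), and choose a nested sequence of intervals $B_k \subset \T$ with endpoints drawn from the orbit $\{j\theta \bmod 1 : 0 \le j \le h_k\}$, arranged so that $\{R_\lambda^j B_k : 0 \le j < h_k\}$ is a Rokhlin tower, i.e.\ its levels are pairwise disjoint sub-intervals of $\T$.

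The rank-one tower $\mathcal{T}_k$ of height $h_k$ over $B_k$ will have exactly two cuts at each step through the following mechanism, based on the three-distance theorem: for $B_k$ of the indicated form, the first-return map of $R_\lambda$ to $B_k$ takes only two return-time values on those points whose orbits enter $B_{k+1}$ before returning, so $B_k$ decomposes naturally into two sub-intervals $B_k^{(1)} \sqcup B_k^{(2)}$, which realize the two cuts of step $k$. The next tower $\mathcal{T}_{k+1}$ is built by stacking the sub-column of $\mathcal{T}_k$ over $B_k^{(1)}$, inserting $s_k^{(1)}$ spacer levels, then stacking the sub-column over $B_k^{(2)}$, then adding $s_k^{(2)}$ further spacers, with $s_k^{(1)} + s_k^{(2)} = h_{k+1} - 2 h_k$. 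The spacer levels correspond geometrically to the arcs of $\T$ that the $R_\lambda$-orbit of $B_{k+1}$ traverses between successive returns to $B_k$, so each spacer is a concrete sub-interval of $\T$ outside $\mathcal{T}_k$. The measure $\mu_\lambda$ is specified concurrently with the construction by declaring that every level of $\mathcal{T}_k$ has $\mu_\lambda$-measure equal to the Lebesgue length of $B_k$ and extending $R_\lambda$-equivariantly to the spacer arcs; taking $h_{k+1} \gg 2 h_k$ makes the aggregate spacer mass diverge, yielding an infinite $\sigma$-finite measure, while the shrinking $B_k$ guarantee non-atomicity.

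Rigidity is then immediate from $|\beta_{n_k}| \to 0$: every point of $\T$ is displaced by $R_\lambda^{h_k}$ by the same amount $|\beta_{n_k}|$, which tends to zero, so $R_\lambda^{h_k}$ converges to the identity in the strong operator topology on $L^2(\mu_\lambda |_E)$ for every set $E$ of finite $\mu_\lambda$-measure; this gives rigidity along $(h_k)$. Uncountability of $C(R_\lambda)$ follows by the standard argument that the weak closure of $\{R_\lambda^{h_k}\}$ is an uncountable abelian subgroup of the centralizer. The main obstacle I anticipate is the combinatorial verification that the abstract 2-cut cutting-and-stacking construction really realizes $R_\lambda$ on $(\T, \mu_\lambda)$ rather than merely a rank-one system having $\lambda$ as an eigenvalue; this reduces to checking that the explicit identification of each tower level with a concrete sub-interval of $\T$ is consistent across scales, which in turn depends on a careful Ostrowski-type bookkeeping for the spacer positions. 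The passage to infinite measure is crucial at this stage, since it lets the $s_k$ be taken as large as necessary to absorb arbitrarily large partial quotients $a_n$ of $\theta$, so that no Diophantine restriction on $\theta$ is required.
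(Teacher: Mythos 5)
There is a genuine gap, and it sits at the heart of your construction: the definition of $\mu_\lambda$. You declare that every level of $\mathcal T_k$ has $\mu_\lambda$-measure equal to the Lebesgue length of $B_k$; since $R_\lambda$ preserves length, this makes $\mu_\lambda$ agree with Haar measure on the union of the tower levels, which is a subset of $\T$ and therefore has total mass at most $1$ --- the ``aggregate spacer mass'' cannot diverge. The obstruction is not cosmetic: any infinite $\sigma$-finite $R_\lambda$-invariant measure is singular with respect to Haar measure, so the bases $B_k$ of a rank-one tower for $(\T,\mu_\lambda,R_\lambda)$ cannot be intervals carrying measure proportional to length. Once the $B_k$ are forced to be Haar-null Cantor-type sets, the three-distance theorem and the first-return picture you invoke (two return times, spacers realized as the arcs traversed between returns) no longer apply; and even inside the interval picture the claim that ``the $s_k$ can be taken as large as necessary'' fails, because the return time of $R_\lambda$ to an interval of length $|B_{k+1}|$ is at most about $1/|B_{k+1}|$, so $h_{k+1}|B_{k+1}|\le 1$ and the heights are not free parameters. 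A second gap is rigidity: for a singular invariant measure, $\lambda^{h_k}\to 1$ does not imply $\mu_\lambda(R_\lambda^{h_k}A\cap B)\to\mu_\lambda(A\cap B)$, since translation by a small angle can carry $\mu_\lambda$ to a mutually singular measure --- this is exactly the phenomenon exploited in Theorem~C, where the same rotation admits a zero-type (hence non-rigid) infinite invariant measure even though $\lambda^{n}\to 1$ along a subsequence. Rigidity must come from the structure of the construction, not from the Diophantine decay alone.

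The paper's route avoids the circle geometry entirely: it builds an abstract infinite-measure $(C,F)$-system with $C_n$ an arithmetic progression $\{0,q_nh_{n-1},\dots,(2^n-1)q_nh_{n-1}\}$ and $h_n=2^nq_nh_{n-1}$, chooses $q_n$ so that the hypothesis of Proposition~\ref{pr3} holds, concludes that $\lambda$ is an eigenvalue with a continuous one-to-one eigenfunction $f$, and sets $\mu_\lambda:=\mu\circ f^{-1}$; the eigenfunction itself is the isomorphism with $(\T,\mu_\lambda,R_\lambda)$, rigidity follows from Fact~\ref{fact2.1} because the $C_n$ are arithmetic progressions with $\#C_n\to\infty$, and the two-cut property is obtained by exhibiting the construction as a telescoping of a two-cut sequence. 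No continued fractions are needed, which is why the result holds for every $\lambda$ of infinite order. To rescue your approach you would have to replace ``measure $=$ length'' by the consistent assignment $\mu_\lambda(\text{level of }\mathcal T_k)=2^{-k}$ and abandon the interval/three-distance picture --- at which point you have essentially rebuilt the paper's abstract construction.
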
 

\begin{thmC} \label{t2}
For each element	$\lambda\in\Bbb T$  of infinite order,
  there is an infinite $\sigma$-finite  $R_\lambda$-invariant non-atomic Borel measure $\mu_\lambda'$ on $\Bbb T$ such that
\begin{itemize}
\item
the dynamical system $(\Bbb T,\mu'_\lambda,R_\lambda)$ is  of rank one, 
the parameters of the underlying cutting-and-stacking  construction  are explicitly described,
\item
$(\Bbb T,\mu'_\lambda,R_\lambda)$ is  totally ergodic  and of zero type,
\item
$C(R_\lambda)=\{R_\lambda^n\mid n\in\Bbb Z\}$,
\item
$\mu'_\lambda\circ R_\beta\perp\mu'_\lambda$ whenever 
$\beta\not\in\{\lambda^n\mid n\in\Bbb Z\}$ and
\item
if an element $\omega\in\Bbb T$ is of infinite order with $\omega\not\in\{\lambda,\lambda^{-1}\}$ then
$\mu'_\omega\perp\mu_\lambda'$.
\end{itemize}
\end{thmC}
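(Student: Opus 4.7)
The plan is to build $\mu'_\lambda$ via an explicit cutting-and-stacking construction performed directly on $\mathbb{T}$, in which $R_\lambda$ is tautologically the tower map. Writing $\lambda=e^{2\pi i\theta}$ with convergents $p_n/q_n$ of $\theta$, I would choose cut numbers $r_n$ and spacer numbers keyed to the $q_n$, in the same spirit as the construction used for Theorem~B, but with the spacer contribution at each stage made much larger and arranged in an asymmetric pattern. The base $I_n$ of the $n$th Rokhlin tower is an arc of $\mathbb{T}$ with $|I_n|\to 0$; its $R_\lambda$-translates $R_\lambda^k I_n$ for $0\le k<h_n$ are pairwise disjoint, and the newly added stage-$n$ spacers are themselves $R_\lambda$-translates of subarcs of $I_n$ lying in $\mathbb{T}\setminus\bigsqcup_k R_\lambda^k I_n$. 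Since the accumulated spacer mass diverges, $\mu'_\lambda$ is $\sigma$-finite, infinite, non-atomic, and $R_\lambda$-invariant by construction.

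The ergodic properties are then verified as follows. Zero type is arranged by placing the stage-$n$ spacers at $R_\lambda$-displacements that outstrip the tower height $h_n$: any finite-measure $A$ is well approximated by a union of levels of some stage-$n$ tower, and $R_\lambda^N A$ moves those levels essentially out of that tower as $N\to\infty$, forcing $\mu'_\lambda(A\cap R_\lambda^N A)\to 0$. Total ergodicity follows from rank-one-ness plus minimality of $R_\lambda$. For the centralizer, the standard rank-one commutation argument (in the infinite-measure form, cf.\ Adams--Friedman--Silva) shows that any $S\in C(R_\lambda)$ is a weak limit of powers of $R_\lambda$; the asymmetric staircase-like spacer sequence, built into the construction from the start, prevents any such weak limit from being anything other than an honest $R_\lambda^n$, giving $C(R_\lambda)=\{R_\lambda^n\mid n\in\mathbb{Z}\}$.

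For the singularity $\mu'_\lambda\circ R_\beta\perp\mu'_\lambda$ when $\beta\notin\{\lambda^n\}$: $\mu'_\lambda\circ R_\beta$ is another $R_\lambda$-invariant $\sigma$-finite measure, so if it were not singular to $\mu'_\lambda$, ergodicity would force $\mu'_\lambda\circ R_\beta=c\mu'_\lambda$ for some $c>0$, which would put $R_\beta$ in $C(R_\lambda)$ and contradict $\beta\notin\{\lambda^n\}$ by the centralizer computation above. The final claim $\mu'_\omega\perp\mu'_\lambda$ for $\omega$ of infinite order with $\omega\notin\{\lambda,\lambda^{-1}\}$ is the main obstacle: both measures are non-atomic infinite $\sigma$-finite measures on $\mathbb{T}$ built from superficially similar orbit data. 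I expect to separate them by exhibiting, inside the stage-$n$ spacer arcs of the $\lambda$-construction, a Borel tail set on which the $R_\lambda$-orbits visit prescribed arcs with densities that differ qualitatively from those produced by the $\omega$-construction---essentially a Borel--Cantelli argument that detects the mismatched continued-fraction scales of $\theta$ and of the rotation number of $\omega$. This bookkeeping between two distinct but structurally similar explicit constructions is where the real technical content of the theorem will sit.
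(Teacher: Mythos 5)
Your overall architecture diverges from the paper's in a way that creates real gaps. The paper does \emph{not} perform the cutting-and-stacking directly on $\mathbb{T}$ with arcs keyed to the convergents of $\theta$ (that is the approach of Drillick et al., which is exactly what limited them to a.e.\ $\theta$). Instead it builds an abstract $(C,F)$-system $(X,\mu,T)$ of zero type whose parameters satisfy a summability condition (condition (v) in the proof, feeding into Proposition~\ref{pr3}) guaranteeing that $\lambda\in e(T)$ \emph{with a one-to-one continuous eigenfunction} $f:X\to\mathbb{T}$; the measure is then simply $\mu_\lambda':=\mu\circ f^{-1}$, and $f$ is an isomorphism onto $(\mathbb{T},\mu_\lambda',R_\lambda)$. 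This transport step is what makes the result work for \emph{every} $\lambda$ of infinite order, and your proposal has no substitute for it: you never explain why your towers of arcs and spacers close up on the circle for an arbitrary rotation number. A second concrete error: total ergodicity does \emph{not} follow from ``rank-one-ness plus minimality of $R_\lambda$.'' Rank one gives ergodicity only (odometers are rank one and not totally ergodic), and minimality is a statement about the topological system, irrelevant to a singular invariant measure. The paper has to build total ergodicity in by hand, via condition (vi) (the sets $C_n^{(1)},C_n^{(2)}$ with incompatible residues mod $p$) together with Corollary~\ref{finiteorder}, to show $e(T)$ is torsion free.

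Two further points. For the centralizer, your plan to run a weak-closure argument is risky: the weak closure theorem is not available for infinite measure preserving rank-one maps in general; the paper instead invokes the Ryzhikov--Thouvenot theorem, which gives $C(R_\lambda)=\{R_\lambda^n\}$ precisely because the system is rank one and of zero type --- so zero type is doing double duty and you should lean on it. Most importantly, the last bullet ($\mu_\omega'\perp\mu_\lambda'$) is exactly where you concede you have no argument, and the route you sketch (a Borel--Cantelli comparison of continued-fraction scales of $\theta$ and of $\omega$) is not how this is, or should be, done --- it would break down whenever the two rotation numbers have comparable approximation behaviour. The paper's argument is soft: if $\lambda^n=\omega^m$ for some $n,m$, both measures are ergodic and invariant for the common power $R_\lambda^n=R_\omega^m$ (here total ergodicity is used), so they are either equivalent or singular, and equivalence would force the quasi-invariance groups $\{\lambda^l\}$ and $\{\omega^l\}$ to coincide, i.e.\ $\omega\in\{\lambda,\lambda^{-1}\}$; if $\lambda$ and $\omega$ are independent, one uses the already-proved fourth bullet for $\omega$ to get $\mu_\omega'\circ R_{\lambda^n}\perp\mu_\omega'$ and then sweeps a putative non-null absolutely continuous piece of $\mu_\lambda'$ around by $R_\lambda$ to cover $\mathbb{T}$. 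You need this kind of group-theoretic/ergodicity argument, not a scale-matching one.
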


As far as we know, Theorem~C provides the first examples of {\it spectrally mixing}\footnote{An infinite measure preserving transformation $S$ is of zero type if and only if the  measure of maximal spectral type of $S$ is Rajchman, i.e. the Koopman operator  associated with $S$ is mixing.} ergodic  infinite invariant measures for irrational rotations.

Everywhere below in this paper we construct the rank-one systems via the $(C,F)$-construction.
It was introduced in \cite{dJ3} and \cite{Da1} (in a  different form). 
Various kinds of the $(C,F)$-construction, interrelationship among them and the classical cutting-and-stacking are discussed in detail in \cite{Da2}.

The outline of the paper is as follows. 
Preliminary information from the theory of continuous fractions, dynamical systems and $(C,F)$-construction is collected in \S 2.
In \S 3 we study  eigenvalues and  eigenfunctions of the $(C,F)$-systems.
Main Result A is proved in \S 4.
\S 5 is devoted to the proof of Theorems~B and C.

\section{Preliminaries}

\subsubsection*{Continued fractions}

We recall some basic facts from the theory of continued fractions.
Every irrational number $\theta$ can be represented as an infinite continued fraction 
$[a_0; a_1, a_2, \ldots]$ with $a_j\in\Bbb N$ for each $j>0$.
The rational numbers $\dfrac{p_k}{q_k} := [a_0; a_1, a_2, \ldots, a_k]$ with $(p_k,q_k)=1$ are called the \textit{convergents} for $\theta$.
For each $k\geq 2$,  
\begin{align*}
	p_k &= a_{k-1} q_{k-1} + p_{k-2}, \text{    } p_0 = 1, \ p_1 = a_0, \\
	q_k &= a_{k-1} q_{k-1} + q_{k-2}, \text{    } q_0 = 0, \ q_1 = 1.
\end{align*}

\begin{definition}
	An irrational number $\theta$ is called \textit{badly approximable} if the sequence  $(a_k)_{k=1}^\infty$  is bounded. 
	The irrational numbers that are not badly approximable are called \textit{well approximable}.
\end{definition}

We will utilize the following well known results from the theory of continuous fractions (see \cite{Kh} for the proof).

\begin{fact} \label{f1}
\begin{enumerate}[label=\upshape(\roman*), leftmargin=*, widest=iii]
	\item 
	$(q_n\theta-p_n)(q_{n+1}\theta-p_{n+1})<0$
	for each $n>0$.
	\item
	$
 \dfrac{1}{q_n+q_{n+1}}<|\theta q_n-p_n|<\dfrac 1{q_{n+1}}
 $ for each $n>0$.
	\item
	$|q_n\theta - p_n| < \min\Big\{|b\theta - a|:{\Bbb N\ni b\le q_n, a\in\Bbb Z}, \frac ab\ne\frac{p_n}{q_n}\Big\}$
	 for each $n>0$.
	 \item
	 If there exist $p,q\in\Bbb N$ such that
	 $$
	 |q\theta - p| < \min\Big\{|b\theta - a|:{\Bbb N\ni b\le q, a\in\Bbb Z}, \frac ab\ne\frac{p}{q}\Big\}
	 $$
	 then $\frac pq=\frac{p_n}{q_n}$ for some $n\in\Bbb N$.
	\item 
	$\theta$ is badly approximable if and only if there exists  a real $\delta>0$ such that
	$\min_{p\in\Bbb Z}|q\theta-p|>\frac \delta q$ for each $q\in\Bbb N$.
	\item 
	If $\theta$ is algebraic of power $n$ then  there exists  a real $\delta>0$ such that
	$\min_{p\in\Bbb Z}|q\theta-p|>\frac \delta {q^n}$ for each $q\in\Bbb N$.
	\item
	The set of badly approximable numbers has Lebesgue measure zero.
\end{enumerate}
\end{fact}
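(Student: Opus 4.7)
The plan is to build up the seven items in order, starting from the recurrence for $(p_n,q_n)$ given in the statement. First I would prove the determinant identity $p_{n+1}q_n - p_n q_{n+1} = (-1)^n$ by induction from the recurrences. Combining this identity with the standard expression $\theta = (p_{n+1}+\theta_{n+2}p_n)/(q_{n+1}+\theta_{n+2}q_n)$, where $\theta_{n+2}\in(0,1)$ is the $(n+2)$-nd complete quotient, produces the closed form
\[
q_n\theta - p_n = \frac{(-1)^n}{q_{n+1}+\theta_{n+2}q_n}.
\]
Item (i) is then immediate from the sign, and (ii) reduces to the elementary inequalities $q_{n+1}<q_{n+1}+\theta_{n+2}q_n<q_n+q_{n+1}$.

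Items (iii) and (iv) are the classical ``best approximation of the second kind'' pair. For (iii), I would assume for contradiction that some $(a,b)$ with $b\le q_n$ and $a/b\ne p_n/q_n$ satisfies $|b\theta-a|\le|q_n\theta-p_n|$; since $(p_n,q_n)$ and $(p_{n+1},q_{n+1})$ form a $\Bbb Z$-basis of $\Bbb Z^2$ by the determinant identity, one can write $(a,b)$ integrally in this basis and, using the alternating signs of $q_n\theta-p_n$ and $q_{n+1}\theta-p_{n+1}$ from (i), force $b>q_n$. Item (iv) is the converse Legendre theorem, proved by locating $p/q$ between the two adjacent convergents whose denominators bracket $q$ and observing that a strict optimality violation must occur unless $p/q$ itself is one of them.

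For (v) I would use (ii) to rewrite $|q_n\theta-p_n|$ as $\Theta(1/q_{n+1})$ and the recurrence $q_{n+1}=a_{n+1}q_n+q_{n-1}$ to convert boundedness of $(a_k)$ into the Diophantine bound $\min_p|q\theta-p|>\delta/q$, with (iii) and (iv) ensuring it suffices to check $q=q_n$. Item (vi) is Liouville's theorem: if $P\in\Bbb Z[x]$ is the minimal polynomial of $\theta$ of degree $n$, then $q^nP(p/q)$ is a nonzero integer, so $|P(p/q)|\ge 1/q^n$, while the mean value theorem gives $|P(p/q)|\le\|P'\|_\infty\cdot|\theta-p/q|$, yielding a stronger bound than the one claimed. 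Finally (vii) follows from a Borel--Cantelli argument under the Gauss measure $dx/((1+x)\log 2)$: the partial-quotient functions $a_k$ have heavy tails (the set $\{a_1>N\}$ has Gauss mass of order $1/N$), so by ergodicity of the Gauss map, $a_k(\theta)\to\infty$ along a subsequence for a.e.\ $\theta$, and the Gauss measure is equivalent to Lebesgue. The main delicacy I anticipate is the sign-bookkeeping in the best-approximation pair (iii)--(iv), since both directions rely critically on the alternating bracketing established in (i), and a naive comparison argument easily loses an index.
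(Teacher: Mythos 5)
The paper never proves this Fact: it is stated as a package of classical results with a pointer to Khinchin's book, so there is no in-paper argument to compare against. Your outline is, in substance, the standard textbook development (essentially the one in Khinchin): the determinant identity $p_{n+1}q_n-p_nq_{n+1}=\pm 1$ together with the complete-quotient formula gives the closed form for $q_n\theta-p_n$, hence (i) and (ii); the $\Bbb Z^2$-basis argument with the alternating signs gives the best-approximation pair (iii)--(iv); (v) follows by bracketing an arbitrary $q$ between consecutive $q_n$ and using $q_{n+1}\le(a_n+1)q_n$; (vi) is Liouville; (vii) is the a.e.\ unboundedness of partial quotients. This is all correct. Two small points of care. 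First, the paper's recurrences are indexed one step off from the usual convention ($q_k=a_{k-1}q_{k-1}+q_{k-2}$ with $q_0=0$, $q_1=1$), so the subscripts in the determinant identity and in the complete-quotient formula must be shifted accordingly; also, in the mixed-sign case of (iii) the conclusion is not ``$b>q_n$'' but the direct inequality $|b\theta-a|\ge|q_n\theta-p_n|+|q_{n+1}\theta-p_{n+1}|$, which is what actually kills the optimality violation. Second, in (vii) the tool is not Borel--Cantelli (the second Borel--Cantelli lemma needs an independence the $a_k$ do not have under the Gauss measure) but the Birkhoff ergodic theorem for the Gauss map, which you also invoke; that is correct but imports a nontrivial theorem, and a more self-contained route is the elementary estimate that the cylinder set $\{a_1\le N,\dots,a_k\le N\}$ has Lebesgue measure decaying geometrically in $k$, so that $\{\sup_k a_k\le N\}$ is null for every $N$. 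Neither point affects the validity of your proposal.
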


\subsubsection*{Dynamical systems}
We recall that a dynamical system $(X,\frak B,\mu,T)$  (or just $T$) is called
\begin{itemize}
\item {\it ergodic}
if each $T$-invariant subset is either $\mu$-null or $\mu$-conull;
\item{\it totally ergodic} 
if $T^p$ is ergodic for each $p\in\Bbb N$.
\item  {\it rigid} if there is a sequence $n_1<n_2<\cdots$ such that
$$
\mu(T^{n_k}A\cap B)\to\mu(A\cap B)\quad\text{as $k\to\infty$}
$$ 
for all subsets $A, B\in\frak B$ with $\mu(A)<\infty$ and $\mu(B)<\infty$;
\item
{\it of $0$-type} if
$$
\mu(T^{n}A\cap B)\to 0\quad\text{as $n\to\infty$}
$$
  for all subsets $A, B\in\frak B$ with $\mu(A)<\infty$ and $\mu(B)<\infty$;
  \item 
  {\it of rank one} if there are subsets $B_1,B_2,\dots$ of $X$ with $\mu(B_k)<\infty$ for each $k\in\Bbb N$ and a sequence of positive integers $n_1<n_2<\cdots$ such that $T^lB_k\cap T^mB_k=\emptyset$ whenever $0\le l<m<n_k$ and $k\in\Bbb N$ and for each subset $A\in\mathfrak B$ of finite measure,
  $$
  \lim_{k\to\infty}\min_{J\subset\{0,\dots, n_k-1\}}\mu\bigg(A\triangle \bigsqcup_{j\in J}T^jB_k\bigg)=0.
  $$
\end{itemize}
Of course, if $T$ is of 0-type then $\mu(X)=\infty$.
If $T$ is of rank one then $T$ is ergodic.

\begin{definition} Suppose that $T$ is ergodic.
A  number $\lambda\in\Bbb T$ is called an {\it eigenvalue of $T$} if there is a measurable function $f:X\to\Bbb T$ such that $f\circ T=\lambda f$.
The function $f$ is called a {\it $\lambda$-eigenfunction} of $T$.
It is defined up to a multiplicative constant from $\Bbb T$.
The set of all eigenvalues is called {\it the $L^\infty$-spectrum} of $T$ and denoted by $e(T)$.
\end{definition}

Of course, $e(T)$ is a subgroup of $\Bbb T$.
If $\mu(X)<\infty$ then $e(T)$ is countable.
It is straightforward to verify that $T$ is totally ergodic if and only if $e(T)$ is torsion free.

For the $\lambda$-rotation $R_\lambda$ on $\Bbb T$ endowed with the Haar measure,
$e(R_\lambda)=\{\lambda^n\mid n\in\Bbb Z\}$.

The {\it centralizer} $C(T)$ of $T$ is the set of invertible $\mu$-preserving transformations that commute with $T$.
Of course, $C(T)$ is a group.

\subsubsection*{$(C, F)$-dynamical systems}

For a detailed exposition of the $(C,F)$-construction of (funny) rank-one actions we refer to \cite{Da1} and \cite{Da2}.
Let $(F_n)_{n\geq 0}$ and $(C_n)_{n\geq 1}$ be two sequences of finite subsets in $\mathbb{Z}$ such that for each $n>0$,
\begin{align}
   \label{2.1}  &F_{0} = \{0\}, \#C_{n} > 1,\\ 
\label{2.2}    &F_{n} + C_{n+1}\subset F_{n+1},\\ 
\label{2.3}     &(F_{n} + c)\cap (F_{n} + c') = \varnothing\text{, if $c, c'\in C_{n+1}$ and $c \neq c'$. }
\end{align}
We let $X_n := F_{n} \times C_{n+1} \times C_{n+2} \times\ldots$ and endow this set with the infinite product topology. Then $X_n$ is a compact Cantor space. The mapping 
\begin{equation*}
    X_n \ni (f_n,c_{n+1},c_{n+2}\ldots) \mapsto (f_n + c_{n+1},c_{n+2},\ldots) \in X_{n+1}
\end{equation*}
is a topological embedding of $X_n$ into $X_{n+1}$. 
Therefore the  inductive limit $X$ of the sequence $(X_n)_{n\geq 0}$ furnished with these embeddings is  well defined.
Moreover, $X$ is a locally compact Cantor space. 
Given a subset $A\subset F_n$, we let
\begin{equation*}
    [A]_n := \{x=(f_n,c_{n+1},\ldots)\in X_n, f_n\in A\}
\end{equation*}
and call this set an $n$-{\it cylinder} in $X$.
 It is open and compact in $X$. 
 For brevity, we will write $[f]_n$ for $[\{f\}]_n, f\in F_n$.
Also, we will write $\mathbf 0$ for $(0,0,\ldots)\in X_0\subset X$.
There exists a unique $\sigma$-finite Borel measure $\mu$ on $X$ such that $\mu(X_0)=1$ and 
\begin{equation*}
    \mu([f]_n)=\mu([f']_n) \text{ for all } f,f'\in F_n, \ n\geq 0.
\end{equation*}
It is easy to verify that
\begin{equation*}
    \mu([A]_n)=\dfrac{\# A}{\# C_1 \cdots \# C_n} \text{ for each subset } A\subset F_n, \ n>0.
\end{equation*}
We also note that $\mu(X)<\infty$ if and only if
\begin{equation}\label{eq2.4}
\sum_{n=1}^\infty \frac{\# F_{n+1}-\# F_n\#C_{n+1}}{\# F_{n+1}}<\infty.
\end{equation}
From now on, 
\begin{equation}
F_n=\{0,1,\dots, h_n-1\}\quad\text{ for some $h_n>0$ and every $n\in\Bbb N$.}
\end{equation}
Then we can   define a transformation $T$ on $X$. 
We first note that for $\mu$-a.e. $x\in X$,
there is $n>0$ such that $x=(f_n,c_{n+1},\ldots)\in X_n$ and $1+f_n\in F_n$. 
We now let
\begin{equation*}
    T x := (1+f_n,c_{n+1},\ldots)\in X_n\subset X.
\end{equation*}
Then $T $ is a well defined  $\mu$-preserving transformation  of $X$. 
We call $(X,\mu, T)$ the $(C,F)$-{\it dynamical system associated with the sequence  
$(C_n,F_{n-1})_{n\geq 0}$.}
This system is of rank one.
We will need the following two facts about $T$.

\begin{fact}\label{fact2.1}
If there is an infinite sequence $n_1<n_2<\cdots$ such that $\# C_{n_k}\to+\infty$ as $k\to\infty$
and $C_{n_k}$ is an arithmetic progression then $T$ is rigid.
\end{fact}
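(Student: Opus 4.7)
The plan is to take $m_k := d_{n_k}$, where $d_{n_k}$ denotes the common difference of the arithmetic progression $C_{n_k}$, and to verify the rigidity definition along (a subsequence of) $(m_k)$. Conditions \eqref{2.2}, \eqref{2.3} force $h_{n+1}\ge h_n\cdot\#C_{n+1}$, and disjointness of the consecutive translates $F_{n_k-1}+c$ and $F_{n_k-1}+c+d_{n_k}$ for $c\in C_{n_k}$ forces $d_{n_k}\ge h_{n_k-1}\to\infty$, so after passing to a subsequence the integers $m_k$ are strictly increasing. Since finite unions of cylinders $[a]_n$, $a\in F_n$, $n\ge 0$, are dense in the measure algebra of subsets of finite $\mu$-measure, it suffices to prove that $\mu(T^{m_k}[a]_N\triangle[a]_N)\to 0$ for every fixed cylinder $[a]_N$.

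Fix such a cylinder and choose $k$ so large that $n_k>N$. Iterating \eqref{2.2} gives $F_N+C_{N+1}+\cdots+C_{n_k}\subset F_{n_k}$, and \eqref{2.3} makes this Minkowski sum \emph{direct}. Inside $X_{n_k}\subset X$ the cylinder splits as
\[
[a]_N=\bigsqcup_{e\in E}\bigsqcup_{c\in C_{n_k}}[a+e+c]_{n_k},\qquad E:=C_{N+1}+\cdots+C_{n_k-1}.
\]
Write $C_{n_k}=\{c^{(0)}<c^{(1)}<\cdots<c^{(r-1)}\}$ with $c^{(j+1)}-c^{(j)}=d_{n_k}$ and $r:=\#C_{n_k}$. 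For each $0\le j<r-1$ the value $a+e+c^{(j)}+d_{n_k}=a+e+c^{(j+1)}$ still lies in $F_{n_k-1}+C_{n_k}\subset F_{n_k}$, and since $F_{n_k}$ is an interval the iterates $T,T^2,\dots,T^{d_{n_k}}$ never leave $X_{n_k}$, so $T^{d_{n_k}}$ maps $[a+e+c^{(j)}]_{n_k}$ bijectively onto $[a+e+c^{(j+1)}]_{n_k}\subset[a]_N$. Summing over $e\in E$ and $0\le j<r-1$ yields
\[
\mu\bigl(T^{d_{n_k}}[a]_N\cap[a]_N\bigr)\ge\frac{r-1}{r}\,\mu([a]_N)\longrightarrow\mu([a]_N)
\]
as $r=\#C_{n_k}\to\infty$, and measure-preservation of $T^{m_k}$ gives $\mu(T^{m_k}[a]_N\triangle[a]_N)\to 0$.

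The argument is essentially bookkeeping inside the $(C,F)$-formalism, so I do not anticipate a substantive obstacle. The one point that has to be tracked carefully is the boundary piece indexed by $c^{(r-1)}$, whose image under $T^{d_{n_k}}$ may escape $[a]_N$; this contributes only the fraction $1/\#C_{n_k}$ of the mass, which vanishes precisely by the hypothesis $\#C_{n_k}\to\infty$. Everything else — the direct-sum decomposition, the interval shape of $F_{n_k}$, and the density of cylinders in the measure algebra — is built into the $(C,F)$-setup.
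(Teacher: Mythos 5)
Your argument is correct: taking the rigidity times to be the common differences $d_{n_k}$, noting $d_{n_k}\ge h_{n_k-1}\to\infty$ from the disjointness in \eqref{2.3}, and showing $T^{d_{n_k}}$ carries each level $[a+e+c^{(j)}]_{n_k}$ onto $[a+e+c^{(j+1)}]_{n_k}$ (all intermediate first coordinates staying in the interval $F_{n_k}$), losing only the fraction $1/\#C_{n_k}$ of the mass from the top level, is exactly the standard proof. The paper does not prove Fact~\ref{fact2.1} itself but cites \cite[Theorem~0.1]{Da4}, whose argument is essentially the one you give, so there is nothing to add.
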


\begin{fact}\label{fact2.2}
If for each $n>0$,
\begin{itemize}
\item
$F_n+F_n+C_{n+1}\subset F_{n+1}$,
\item
the sets $F_n-F_n+c-c'$, $c\ne c'\in C_{n+1}$, and $F_n-F_n$ are all mutually disjoint,
\item
$\# C_n\to+\infty$ as $n\to\infty$
\end{itemize}
then $T$ is of zero type. 
\end{fact}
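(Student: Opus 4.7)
The plan is to show $\mu(T^kA\cap B)\to 0$ as $|k|\to\infty$ for every pair $A,B$ of subsets of finite measure. By density it suffices to treat $A=[f]_n$ and $B=[g]_n$ at some common stage $n$. The idea is to pass to a large auxiliary stage $M$, decompose each cylinder into stage-$M$ sub-cylinders, and reduce the problem to an additive-combinatorial count governed by the Sidon-type disjointness.

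First I would iterate the doubling hypothesis $F_n+F_n+C_{n+1}\subset F_{n+1}$ to check that for any fixed $k$ and all sufficiently large $M$, the transformation $T^k$ acts on each stage-$M$ sub-cylinder $[h]_M\subset[f]_n$ as a pure level shift $[h]_M\mapsto[h+k]_M\subset X_M$ (with a boundary contribution from sub-cylinders whose shifts leave $F_M$ that vanishes as $M\to\infty$). Passing to the limit gives
\begin{equation*}
\mu\bigl(T^k[f]_n\cap[g]_n\bigr)=\lim_{M\to\infty}\frac{N_M(k)}{\#C_1\cdots\#C_M},
\end{equation*}
where $N_M(k)$ counts pairs $(\mathbf c,\mathbf c')\in\bigl(\prod_{j=n+1}^M C_j\bigr)^2$ satisfying $\sum_{j=n+1}^M(c'_j-c_j)=k-(g-f)$.

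Next I would exploit the disjointness hypothesis. Iterating (\ref{2.2}) shows that the sum map $(c_1,\dots,c_M)\mapsto c_1+\cdots+c_M$ embeds $C_1\times\cdots\times C_M$ injectively into $F_M$. Moreover, if $\mathbf c\neq\mathbf c'$ and $j_0:=\max\{j:c_j\neq c'_j\}$, then the residual sum $\sum_{j<j_0}(c'_j-c_j)$ lies in $F_{j_0-1}-F_{j_0-1}$; the mutual disjointness of the translates $F_{j_0-1}-F_{j_0-1}+(c-c')$ over distinct pairs $c\neq c'\in C_{j_0}$ then uniquely determines $(c_{j_0},c'_{j_0})$ from the target value. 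Descending through indices $j_0>j_1>\cdots>j_L$, the whole disagreement pattern of $(\mathbf c,\mathbf c')$ is forced by $k-(g-f)$. This yields
\begin{equation*}
\mu\bigl(T^k[f]_n\cap[g]_n\bigr)\le\sum_{\substack{j_0>j_1>\cdots>j_L>n\\ \text{admissible for }k-(g-f)}}\prod_{i=0}^L\frac{1}{\#C_{j_i}}.
\end{equation*}

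Admissibility at the leading stage forces $2h_{j_0}\ge|k-(g-f)|$, so the minimum admissible $j_0$ tends to $\infty$ as $|k|\to\infty$; combined with $\#C_n\to\infty$, the dominant summand $1/\#C_{j_0}$ tends to zero. The hard part will be to show that the \emph{total} sum above also vanishes in the limit, using only the weak hypothesis $\#C_n\to\infty$ (rather than summability of $1/\#C_n$). This should follow from the observation that at each recursive descent the remaining target is confined to a strictly smaller difference set, so for any fixed $k$ the tree of admissible continuations is finite and its total weight is essentially controlled by its leading factor.
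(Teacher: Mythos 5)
Your overall strategy --- reducing $\mu(T^k[f]_n\cap[g]_n)$ to counting pairs $(\mathbf c,\mathbf c')$ with $\sum_j(c_j'-c_j)=k-(g-f)$ and exploiting the Sidon-type disjointness --- is the right one; note that the paper itself gives no proof of Fact~\ref{fact2.2} but refers to a modification of the proof of \cite[Theorem~6.1]{Da3}, which runs along these lines. However, your final step contains a genuine gap, and it sits exactly where you flag ``the hard part''. The assertion that the tree of admissible chains is finite with ``total weight essentially controlled by its leading factor'' is not a proof: a finite tree can have large total weight, and with only $\# C_n\to\infty$ (no summability of $1/\# C_n$) the naive estimate $\sum_{j_0\ge j_{\min}}\frac{1}{\# C_{j_0}}\prod_{n<j<j_0}\bigl(1+\frac{1}{\# C_j}\bigr)$ need not tend to $0$ (take $\# C_j=j$). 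To close the gap you must bound the branching of the tree, and for that you need an observation you never state: if $j$ is a disagreement level for the residual target $t'$, then $t'\notin F_{j-1}-F_{j-1}$ by the disjointness hypothesis, i.e. $|t'|\ge h_{j-1}$, while also $|t'|<h_j+h_{j-1}$; since the first bullet forces $h_j\ge 2h_{j-1}$, at most two (consecutive) levels $j$ are admissible for any given target.

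Once you have that observation, the full recursive descent is unnecessary and the argument simplifies considerably. Split the pairs according to the top disagreement level $j_0=\max\{j:c_j\ne c_j'\}$ only. For fixed $j_0$ the ordered pair $(c_{j_0},c_{j_0}')$ is uniquely determined by the target (disjointness); the tail $j>j_0$ contributes $\prod_{j>j_0}\# C_j$ common choices; and below $j_0$ you do not need uniqueness of the disagreement pattern at all: by injectivity of the sum map, each $(c_j)_{j<j_0}$ admits at most one partner $(c_j')_{j<j_0}$ with the prescribed difference of sums, giving at most $\prod_{j<j_0}\# C_j$ pairs. Hence $N_M(k)\le\sum_{j_0\,\mathrm{adm.}}\prod_{j\ne j_0}\# C_j$, so that $\mu(T^k[f]_n\cap[g]_n)\le\sum_{j_0\,\mathrm{adm.}}1/\# C_{j_0}\le 2\max_{j_0\,\mathrm{adm.}}1/\#C_{j_0}$, and the admissible $j_0$ tend to $\infty$ with $|k|$, which gives zero type. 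So your reduction to the counting problem is sound, but as written the proof is incomplete at the decisive estimate.
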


Fact~\ref{fact2.1} is well known (see, for instance, the proof of \cite[Theorem~0.1]{Da4}).
A proof of Fact~\ref{fact2.2}  can be obtained as a slight modification of the proof of \cite[Theorem~6.1]{Da3}.

We consider also an equivalence relation $\mathcal R$ on $X$:
\begin{equation}\label{ga2.6}
\begin{gathered}
(x,y)\in\mathcal R\iff\exists n>0\text{ such that } x=(f_n,c_{n+1},\dots)\in X_n,\\
 y=(f_n',c_{n+1}',\dots)\in X_n,\text{ and $c_j=c_j'$ for each $j>n$}.
\end{gathered}
\end{equation}
We call $\mathcal R$ {\it the $(C,F)$-equivalence relation} on $X$.
The $T$-orbit equivalence relation coincides with $\mathcal R$ reduced to a $\mu$-conull subset.

\section{Eigenfunctions of $(C,F)$-equivalence relations}

Let $\mathcal{R}$ be the $(C,F)$-equivalence relation on $X$
and let 
(\ref{ga2.6}) hold.
We now define a Borel mapping $d:\mathcal R\to \Z$ by setting 
\begin{equation*}
    d(x,y) := f_n-f_n', \quad\text{for each $(x,y)\in\mathcal R$}.
\end{equation*}
It is straightforward to verify that
 $$
 d(x,y)+d(y,z)=d(x,z)\ \text{ for all $(x,y),(y,z)\in \mathcal R$.}
 $$
 In other words, 
 $d$
   is a Borel cocycle of $\mathcal{R}$
  with values in $\Bbb Z$.

\begin{definition}\label{def3}
	We call a complex number $\lambda\in\Bbb T$  an {\it eigenvalue} of $\mathcal R$ if 
	the cocycle $\mathcal R\ni (x,y)\mapsto\lambda^{d(x,y)}\in\Bbb T$ is a coboundary, i.e.
	there exists a Borel  map $\varphi: X\rightarrow \Bbb T$ and a subset $A\subset X$ such that $\mu(A)=0$ and 
	$$
	\lambda^{d(x,y)}=\varphi(x)\varphi(y)^{-1}\qquad\text{for all $(x,y)\in\mathcal{R}$ with $x,y\not\in A$.}
	$$	 
	We call $\varphi$ a {\it $\lambda$-eigenfunction for $\mathcal R$}. 
	It is defined up to a multiplicative constant.
	We denote by  $e(\mathcal R)$ the set of all eigenvalues of $\mathcal R$.
\end{definition}

\subsubsection*{Continuous eigenfunctions}
In this subsection we study only  continuous eigenfunctions of $\mathcal R$ and the corresponding eigenvalues.

\begin{proposition}\label{pr1}
	Let $\lambda\in e(\mathcal{R})$.
	If a $\lambda$-eigenfunction $\varphi$  is continuous   at a point, then it is continuous everywhere on $X$ and
	for each sequence $(c_k)_{k=1}^\infty\in C_1\times C_2\times\cdots$, the series 
	$\prod\limits_{k=1}^{\infty}\lambda^{c_k}$ converges.
	Moreover, for each  $x=(f_n, c_{n+1}, c_{n+2},\ldots)\in X_n\subset X$, we have that 
	\begin{equation*}
	\varphi(x) = \varphi(\mathbf{0}) \,\lambda^{f_n}\prod\limits_{k=n+1}^{\infty}\lambda^{c_k}.
	\end{equation*}
\end{proposition}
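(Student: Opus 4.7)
The plan is to express $\varphi(x)$ for an arbitrary $x \in X$ by joining $x$ to the point of continuity $x_0$ through a chain of $\mathcal{R}$-equivalent auxiliary points, and then extracting the explicit formula and convergence of the infinite products by passing to the limit in the cocycle identity.

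Fix $x_0 = (f_N^{(0)}, c_{N+1}^{(0)}, c_{N+2}^{(0)},\ldots) \in X_N$ at which $\varphi$ is continuous, and let $A$ be a null set outside of which the cocycle identity holds. Given $x = (f_n, c_{n+1}, c_{n+2}, \ldots) \in X_n$, embed into the level $\max(n,N)$ so that we may assume $n = N$, and set
\[
y_j := (f_N^{(0)}, c_{N+1}^{(0)}, \ldots, c_{N+j}^{(0)}, c_{N+j+1}, c_{N+j+2}, \ldots) \in X_N.
\]
Then $y_j$ agrees with $x_0$ in its first $j+1$ coordinates in $X_N$, so $y_j \to x_0$ in $X$; viewed in $X_{N+j}$, the points $y_j$ and $x$ share the tail from position $N+j+1$ onward, so $(x, y_j) \in \mathcal{R}$ with $d(x, y_j) = (f_N - f_N^{(0)}) + \sum_{k=N+1}^{N+j}(c_k - c_k^{(0)})$. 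The cocycle identity gives $\lambda^{d(x,y_j)} = \varphi(x)/\varphi(y_j)$ whenever $x, y_j \notin A$; combined with $\varphi(y_j) \to \varphi(x_0)$ by continuity, this forces $\lambda^{d(x,y_j)}$ to have a limit, so $\prod_{k=N+1}^\infty \lambda^{c_k - c_k^{(0)}}$ converges. Specializing (e.g., to $x = \mathbf{0}$) shows that $\prod_k \lambda^{c_k^{(0)}}$ and $\prod_k \lambda^{c_k}$ converge separately, and rearrangement produces
\[
\varphi(x) = \varphi(\mathbf{0})\,\lambda^{f_n}\prod_{k=n+1}^\infty \lambda^{c_k}
\]
on a conull set.

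Two subtleties then remain. First, the cocycle identity holds only off $A$; for $\mu$-a.e.\ $x$ the countable family $(y_j)_j$ also avoids $A$, which yields the formula almost everywhere. Second, to conclude that the formula holds at every point and that $\varphi$ is continuous on all of $X$, one verifies that the right-hand side is a continuous function of $x$. This reduces to the uniformity statement that $\prod_{k > K} \lambda^{c_k} \to 1$ uniformly over $(c_k) \in \prod_{k > K} C_k$ as $K \to \infty$; this is where I expect the main technical difficulty to lie, and it should be drawn out of the pointwise convergence already obtained via an equicontinuity argument on the compact space $\prod_k C_k$. Once the right-hand side is known to be continuous and to agree with $\varphi$ on a conull set, continuity of $\varphi$ at the single point $x_0$ is enough to force $\varphi$ to coincide with this continuous right-hand side everywhere, simultaneously yielding global continuity of $\varphi$ and the claimed formula.
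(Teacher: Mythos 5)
The first half of your argument is essentially the paper's own proof. The paper normalizes the continuity point to $\mathbf 0$ and uses the auxiliary points $p_k(x)=(0,\dots,0,c_{k+1},c_{k+2},\dots)$, which are exactly your splice points $y_j$ in that special case; passing to the limit in $\lambda^{d(x,p_k(x))}=\varphi(x)\varphi(p_k(x))^{-1}$ gives the convergence of the products and the formula just as you describe. One caveat on your closing logic, though: ``$\varphi$ agrees with a continuous function on a conull set and is continuous at $x_0$'' does \emph{not} force $\varphi$ to equal that function everywhere (modify $\varphi$ at a single point $x_1\ne x_0$). To get the formula at \emph{every} $x$ you must apply the cocycle identity at $x$ itself, i.e.\ read the identity as holding off no exceptional set (as the paper implicitly does), or be content with a statement about a version of $\varphi$.

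The genuine gap is in the continuity step, which you yourself flag as the main difficulty and then do not carry out. You correctly reduce everything to the uniform tail estimate $\sup_{(y_k)}\big|\prod_{k=l}^{m}\lambda^{y_k}-1\big|\to 0$ as $l\to\infty$, uniformly in $m\ge l$ and in $(y_k)\in\prod_{k}C_k$. But the proposed mechanism --- ``an equicontinuity argument on the compact space $\prod_k C_k$'' --- is circular: pointwise convergence of continuous functions on a compact space does not yield uniform convergence, and equicontinuity of the partial-product maps $(y_k)\mapsto\prod_{k\le m}\lambda^{y_k}$ in the tail coordinates is precisely the uniform statement you are trying to establish. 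The paper closes this with a concrete sandwich: choose $a_k,b_k\in C_k$ maximizing and minimizing $\Arg\lambda^{c}$ over $c\in C_k$; the two products $\prod_k\lambda^{a_k}$ and $\prod_k\lambda^{b_k}$ converge by the first part, so their tails are uniformly close to $1$, and the termwise inequalities $\Arg\lambda^{b_k}\le\Arg\lambda^{y_k}\le\Arg\lambda^{a_k}$ trap every other tail product between them. (An alternative that stays closer to your ``derive uniformity from pointwise convergence'' idea: if the estimate failed, one could splice bad blocks $y^{(j)}_k$, $l_j\le k\le m_j$, with $\big|\prod_{k=l_j}^{m_j}\lambda^{y^{(j)}_k}-1\big|\ge\delta$ into a single sequence $(y_k)$, contradicting the Cauchy property of its convergent product.) Some such argument must be supplied; as written, the key step is asserted rather than proved.
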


\begin{proof} We will consider only the case where $\varphi$ is continuous at $\mathbf 0$. 
The other cases are considered in a similar way.
Take a point $x\in X$.
Then there is $n>0$ such that $x\in X_n$ and $x=(f_n, c_{n+1}, c_{n+2},\dots)$ for some $f_n\in F_n$ and $c_k\in C_k$ if $k>n$.
We now set 
$$
p_k(x):=(0,\dots,0,c_{k+1},c_{k+2},\dots)\in X_n
$$
for each $k\ge n$.
Then $p_k(x)\to\mathbf 0$ as $k\to\infty$.
Hence $\varphi(p_k(x))\to \varphi(\mathbf 0)$ as $k\to\infty$.
On the other hand, $(x,p_k(x))\in\mathcal R$ and $d(x,p_k(x))=f_n+c_{n+1}+\cdots+c_k$.
We now have
$$
\lambda^{f_n+c_{n+1}+\cdots+c_k}=\lambda^{d(x,p_k(x))}=
\varphi(x)\varphi(p_k(x))^{-1}.
$$
The righthand side of this formula tends to $\varphi(x)\varphi(\mathbf 0)^{-1}$ as $k\to\infty$.
Hence there is a limit of the lefthand side.
Of course, this limit equals  $\lambda^{f_n}\prod_{k>n}\lambda^{c_k}$,
as desired.

It remains to prove that $\varphi$ is continuous at $x$.
Given $z\in\Bbb T$, we  write Arg$\,z=\tau$ if $z=e^{i\tau}$ and $-\pi<\tau\le\pi$.
For each $k\in\Bbb N$, we select  $a_k,b_k\in C_k$ so that 
$$
\max_{c\in C_k}\text{Arg\,}\lambda^c=\text{Arg\,}\lambda^{a_k}
\quad\text{ and }\quad
\min_{c\in C_k}\text{Arg\,}\lambda^c=\text{Arg\,}\lambda^{b_k}.
$$
Since the series $\prod_{k=1}^\infty\lambda^{a_k}$ and $\prod_{k=1}^\infty\lambda^{b_k}$
converge, we can find, for each $\epsilon>0$, a number $N>0$ so that 
$$
\left|\text{Arg\,}\prod_{k=l}^\infty\lambda^{a_k}\right|<\epsilon\quad \text{and} \quad \left|\text{Arg\,}\prod_{k=l}^\infty\lambda^{b_k}\right|<\epsilon\quad\text{ whenever $l>N$.}
$$
Therefore, if $(y_l,y_{l+1},\dots)\in C_{l}\times C_{l+1}\times\cdots$ then 
$$
-\epsilon<\text{Arg\,}\prod_{k=l}^\infty\lambda^{b_k}\le\text{Arg\,}\prod_{k=l}^\infty\lambda^{y_k}\le\text{Arg\,}\prod_{k=l}^\infty\lambda^{a_k}<\epsilon.
$$
Let $f_l:=f_n+c_{n+1}+\cdots+c_l$.
Then $x\in [f_l]_l$ and $ [f_l]_l$ is a compact open neighborhood of $x$.
Hence, for each 
 $y=(f_l,y_{l+1},y_{l+2},\dots)\in [f]_l$,
$$
|\varphi(x)-\varphi(y)|= \left|\prod_{k>l}\lambda^{c_k}-\prod_{k>l}\lambda^{y_k}\right|\le
 \left|\text{Arg\,}\prod_{k>l}\lambda^{b_k}-\text{Arg\,}\prod_{k>l}\lambda^{a_k}\right|<
2\epsilon.
$$
\end{proof}

%\vskip 9 pt

We note
that the reasoning above proves also the
converse to Proposition~\ref{pr1}.

\begin{proposition}\label{pr2}
If $\lambda\in\Bbb T$ and the series
$\prod\limits_{k=1}^{\infty}\lambda^{c_k}$ converges for each sequence $(c_k)_{k=1}^\infty$ with
$c_k\in C_k$ for every $k>0$ then   a function $\varphi:X\to\Bbb T$ is well defined by the formula
\begin{equation}\label{eqq1}
X\supset X_n\ni x=(f_n, c_{n+1}, c_{n+2},\ldots)\mapsto\varphi(x):=
\lambda^{f_n}\prod\limits_{k=n+1}^{\infty}\lambda^{c_k}.
\end{equation}
This function is continuous on $X$. 
Moreover, $\lambda\in e(\mathcal R)$ and $\varphi$ is a $\lambda$-eigenfunction for $\mathcal R$.
\end{proposition}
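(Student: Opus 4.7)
The plan is to check three things in turn — that (\ref{eqq1}) defines a consistent function on the inductive limit, that this function is continuous, and that it satisfies the eigenfunction identity on $\mathcal R$ — each being a short consequence of the standing hypothesis that every product $\prod_{k=1}^\infty\lambda^{c_k}$ with $c_k\in C_k$ converges.

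For well-definedness, a point $x=(f_n,c_{n+1},c_{n+2},\dots)\in X_n$ is identified with $(f_n+c_{n+1},c_{n+2},\dots)\in X_{n+1}$ under the canonical embedding. First I would plug both representations into (\ref{eqq1}) and compare $\lambda^{f_n}\prod_{k\ge n+1}\lambda^{c_k}$ with $\lambda^{f_n+c_{n+1}}\prod_{k\ge n+2}\lambda^{c_k}$; they agree, and induction on the depth then shows $\varphi$ is consistent on the whole of $X$.

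For continuity, I would fix $x=(f_n,c_{n+1},\dots)\in X_n$ and set $f_l:=f_n+c_{n+1}+\cdots+c_l$ for $l\ge n$, so that $\{[f_l]_l\}_{l\ge n}$ is a neighborhood base of clopen sets at $x$. The argument mirrors the one in the proof of Proposition \ref{pr1}: for each $k$ select $a_k,b_k\in C_k$ realizing the maximum and minimum of $\text{Arg\,}\lambda^c$ over $c\in C_k$, and use convergence of the two distinguished products $\prod_k\lambda^{a_k}$ and $\prod_k\lambda^{b_k}$ to squeeze the argument of every tail $\prod_{k\ge l}\lambda^{y_k}$ with arbitrary $y_k\in C_k$ into an arbitrarily small interval around $0$ once $l$ is large enough. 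This uniform tail bound gives $|\varphi(x)-\varphi(y)|<2\epsilon$ for every $y\in[f_l]_l$, hence continuity of $\varphi$ at $x$.

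For the eigenfunction identity, given $(x,y)\in\mathcal R$ pick $n$ so large that $x=(f_n,c_{n+1},c_{n+2},\dots)$ and $y=(f_n',c_{n+1},c_{n+2},\dots)$ share tails from index $n+1$ on; then (\ref{eqq1}) yields $\varphi(x)\varphi(y)^{-1}=\lambda^{f_n-f_n'}=\lambda^{d(x,y)}$ pointwise on all of $\mathcal R$ (so that one may take the exceptional null set $A=\emptyset$), establishing $\lambda\in e(\mathcal R)$. I do not anticipate a genuine obstacle: the only substantive step is the continuity estimate, and it is the same tail argument already carried out in Proposition \ref{pr1} — as the text notes, Proposition \ref{pr2} is really the converse of Proposition \ref{pr1} and its proof amounts to rereading that one.
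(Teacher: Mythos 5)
Your proposal is correct and follows essentially the same route as the paper, which proves Proposition~\ref{pr2} simply by observing that the tail-squeezing continuity argument already carried out in the proof of Proposition~\ref{pr1} applies verbatim; your added verifications of consistency under the embeddings $X_n\hookrightarrow X_{n+1}$ and of the identity $\varphi(x)\varphi(y)^{-1}=\lambda^{d(x,y)}$ on $\mathcal R$ are exactly the routine steps the paper leaves implicit.
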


\begin{corollary}\label{co1} If \,$\sum_{n=1}^\infty\max_{c\in C_n}|1-\lambda^{c}|<\infty$
then $\lambda\in e(\mathcal R)$ and the function $\varphi$ defined by
 {\rm (\ref{eqq1})} is 
a continuous $\lambda$-eigenfunction of $\mathcal R$.
\end{corollary}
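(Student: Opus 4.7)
My plan is to deduce the corollary directly from Proposition~\ref{pr2}. That proposition already produces a continuous $\lambda$-eigenfunction from the mere convergence of the infinite product $\prod_{k=1}^\infty \lambda^{c_k}$ along every selection $(c_k)_{k\ge 1}$ with $c_k\in C_k$; so it suffices to verify that the hypothesis $\sum_{n=1}^\infty \max_{c\in C_n}|1-\lambda^c|<\infty$ forces all such products to converge.

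First I would fix an arbitrary sequence $(c_k)_{k\ge 1}\in C_1\times C_2\times\cdots$ and observe the pointwise domination
\begin{equation*}
\sum_{k=1}^\infty |1-\lambda^{c_k}| \ \le\ \sum_{k=1}^\infty \max_{c\in C_k}|1-\lambda^c|\ <\ \infty.
\end{equation*}
Thus the sequence of factors $\lambda^{c_k}=1+(\lambda^{c_k}-1)$ satisfies the standard absolute-convergence criterion for infinite products of complex numbers: when $\sum_k|a_k|<\infty$, the partial products $\prod_{k\le N}(1+a_k)$ form a Cauchy sequence in $\mathbb{C}$ and converge to a well-defined limit. Apply this with $a_k:=\lambda^{c_k}-1$ to conclude that $\prod_{k=1}^\infty \lambda^{c_k}$ converges.

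Since the choice of $(c_k)$ was arbitrary, the hypothesis of Proposition~\ref{pr2} is satisfied. That proposition then immediately yields $\lambda\in e(\mathcal R)$ and asserts that the function $\varphi$ defined by (\ref{eqq1}) is a continuous $\lambda$-eigenfunction of $\mathcal R$, which is exactly the statement of the corollary.

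There is no real obstacle here; the only mild subtlety is invoking the correct convergence criterion for infinite products (absolute summability of $\lambda^{c_k}-1$ implies convergence of $\prod \lambda^{c_k}$), which is a routine complex-analysis fact. Everything else is bookkeeping reducing the corollary to Proposition~\ref{pr2}.
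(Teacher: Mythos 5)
Your proof is correct and follows essentially the same route as the paper: reduce to Proposition~\ref{pr2} by checking that $\sum_k|1-\lambda^{c_k}|<\infty$ for every selection $(c_k)$, conclude convergence of $\prod_k\lambda^{c_k}$, and cite the proposition. The only cosmetic difference is that the paper justifies the convergence via the explicit inequality $|1-\lambda^{a_1+\cdots+a_n}|\le\sum_{j=1}^n|1-\lambda^{a_j}|$ together with the Cauchy criterion, whereas you invoke the standard absolute-convergence criterion for infinite products; these amount to the same thing.
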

\proof We note that
$$
|1-\lambda^{\sum_{j=1}^n a_n}|\le\sum_{j=1}^n|1-\lambda^{a_j}|
$$
for arbitrary $a_1,\dots,a_n\in\Bbb Z$.
Hence the condition of the corollary and the Cauchy criterion of convergence  yield that the series 
$\prod\limits_{k=1}^{\infty}\lambda^{c_k}$ converges for each sequence $(c_k)_{k=1}^\infty$ with
$c_k\in C_k$ for every $k>0$. 
It remains to apply~Proposition~\ref{pr2}.
\qed

We now provide a sufficient condition for existence of  one-to-one  eigenfunctions of $\mathcal R$.

\begin{proposition} \label{pr3}
Let $\lambda\in\Bbb T$ be of infinite order.
Suppose that for each $n\ge 0$, 
$$
\min_{ f\ne f'\in F_n'} |1-\lambda^{f-f'}|>\sum_{k>n}\max_{c,c'\in C_k}|1-\lambda^{c-c'}|
$$
then $\lambda\in e(\mathcal R)$ and each $\lambda$-eigenfunction is continuous and one-to-one.
\end{proposition}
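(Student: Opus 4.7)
The plan is as follows. Taking $n=0$ in the hypothesis (where $F_0=\{0\}$ makes the left side vacuously $+\infty$) gives the crucial summability
$$
\sum_{k\ge1}\max_{c,c'\in C_k}|1-\lambda^{c-c'}|<\infty.
$$
Corollary~\ref{co1} is not directly applicable here, because only \emph{differences} are controlled and $\sum_k\max_{c\in C_k}|1-\lambda^c|$ may well diverge. The workaround is to pick any reference sequence $c_k^\sharp\in C_k$ and define
$$
\varphi(x):=\lambda^{f_n-\sum_{k=1}^{n}c_k^\sharp}\prod_{k>n}\lambda^{c_k-c_k^\sharp},\qquad x=(f_n,c_{n+1},c_{n+2},\ldots)\in X_n.
$$
The infinite product converges absolutely by the summability above, and a direct one-line computation shows that swapping the level-$n$ representative of $x$ for its level-$(n{+}1)$ image $(f_n+c_{n+1},c_{n+2},\ldots)$ leaves $\varphi(x)$ unchanged, so $\varphi\colon X\to\Bbb T$ is well defined.

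Next I would verify that $\varphi$ is continuous and is a $\lambda$-eigenfunction for $\mathcal R$. Continuity at $x$ is immediate from the cylindrical topology: for large $m$ every $y\in[f_m]_m$ agrees with $x$ through level $m$, so
$$
|\varphi(y)/\varphi(x)-1|\le\sum_{k>m}\max_{c,c'\in C_k}|1-\lambda^{c-c'}|\xrightarrow[m\to\infty]{}0.
$$
The cocycle identity is handled exactly as in Proposition~\ref{pr2}: any $(x,y)\in\mathcal R$ can be represented at a common level $n$ with matching tails, whereupon $\varphi(x)/\varphi(y)=\lambda^{f_n-f_n'}=\lambda^{d(x,y)}$. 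This already establishes $\lambda\in e(\mathcal R)$.

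Injectivity is the step where the full gap hypothesis bites. Assume $\varphi(x)=\varphi(y)$ with $x\ne y$ and write both at a common level $n$. If $f_n\ne f_n'$, the equality rearranges to $\lambda^{f_n-f_n'}=\prod_{k>n}\lambda^{c_k'-c_k}$ and forces
$$
\min_{f\ne f'\in F_n}|1-\lambda^{f-f'}|\le|1-\lambda^{f_n-f_n'}|\le\sum_{k>n}\max_{c,c'\in C_k}|1-\lambda^{c-c'}|,
$$
contradicting the hypothesis at level $n$. Hence $f_n=f_n'$; if the tails still disagree, let $k_0>n$ be the first index of discrepancy, note that the $f$-parts $f_n+c_{n+1}+\cdots+c_{k_0}$ and $f_n+c_{n+1}+\cdots+c_{k_0}'$ are then distinct elements of $F_{k_0}$ differing by $c_{k_0}-c_{k_0}'$, and re-run the same estimate at level $k_0$. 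Thus $x=y$. Finally, since $T$ is of rank one and hence ergodic, any two $\lambda$-eigenfunctions of $\mathcal R$ differ $\mu$-a.e.\ by a constant from $\Bbb T$, so every $\lambda$-eigenfunction coincides $\mu$-a.e.\ with the continuous, one-to-one $\varphi$ just constructed. The main obstacle is precisely the opening renormalization: one must recognise that the formula of Proposition~\ref{pr2} cannot be used verbatim and replace it by its ``difference form'' relative to a fixed reference sequence, at the cost of the innocuous-looking factor $\lambda^{-\sum_{k\le n}c_k^\sharp}$.
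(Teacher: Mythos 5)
Your proof is correct, and its core---building a continuous eigenfunction as an infinite product and proving injectivity from the estimate $|1-\lambda^{f_n-f_n'}|\le\sum_{k>n}|1-\lambda^{c_k-c_k'}|\le\sum_{k>n}\max_{c,c'\in C_k}|1-\lambda^{c-c'}|$, after reducing to a level at which the $F$-coordinates of the two points differ---is exactly the argument in the paper. The one genuine difference is your opening renormalization. The paper asserts that the hypothesis implies $\sum_{k}\max_{c\in C_k}|1-\lambda^{c}|<\infty$ and then invokes Corollary~\ref{co1} to obtain the eigenfunction $\varphi(x)=\lambda^{f_n}\prod_{k>n}\lambda^{c_k}$. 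That step is valid whenever $0\in C_k$ for every $k$ (which holds in every application of the proposition later in the paper), but, as you observe, it does not follow from the stated hypothesis alone, since only the differences $c-c'$ are controlled. Your variant with a reference sequence $(c_k^\sharp)$ and the product $\prod_{k>n}\lambda^{c_k-c_k^\sharp}$ converges under the hypothesis as literally stated, at the modest cost of re-verifying by hand the consistency across levels, the continuity, and the cocycle identity that Proposition~\ref{pr2} would otherwise supply; these checks are all carried out correctly. So your route is a slightly more careful (and marginally more general) version of the same proof, and it in fact patches a small imprecision in the paper's first step; the remainder, including the closing remark that an arbitrary measurable $\lambda$-eigenfunction is a.e.\ proportional to $\varphi$ by ergodicity, coincides with the paper's.
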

\proof It follows from the condition of the proposition that
$$
\sum_{k=1}^\infty\max_{c\in C_k}|1-\lambda^{c}|<\infty.
$$
Hence $\lambda\in e(\mathcal R)$ by Corollary~\ref{co1}.
Let $\varphi$ be the $\lambda$-eigenfunction of $\mathcal R$ defined by (\ref{eqq1}).
It is continuous by Corollary~\ref{co1}. 
 Suppose that $\varphi(x)=\varphi(y)$ for some  $x,y\in X$ and $x\ne y$.
 Then there is $n>0$ such that $x,y\in X_n$, $x=(f_n,c_{n+1},c_{n+2},\dots)$,  
 $y=(f_n',c_{n+1}',c_{n+2}',\dots)$ and $f_n\ne f_n'\in F_n$.
 It follows from (\ref{co1}) that
$$
\lambda^{f_n}\prod\limits_{k>n}\lambda^{c_k} =\lambda^{f_n^{\prime}}\prod\limits_{k>n}\lambda^{c_k^{\prime}},\text{ i.e. \ }
\lambda^{f_n^{\prime}-f_n}=\prod\limits_{k>n}\lambda^{c_k-c_k^{\prime}}.
$$
Hence 
$$
|1- \lambda^{f_n^{\prime}-f_n}|=\bigg|1-\prod\limits_{k>n}\lambda^{c_k-c_k^{\prime}}\bigg|\le\sum_{k>n}|1-\lambda^{c_k-c_k'}|.
$$
This contradicts to the condition of the proposition.
Thus, $\varphi$ is one-to-one.
An arbitrary $\lambda$-eigenfunction is also one-to-one because it is proportional to $\varphi$.
\qed

We will need  one more sufficient condition for existence of one-to-one eigenfunctions.
It is a close analogue of Proposition~\ref{pr3}.
Since it is proved in a very  similar way as Proposition~\ref{pr3}, 
we state it without proof.\footnote{We only note that $|\Arg\lambda^s|\ge |1-\lambda^s|$ for each $s\in \Bbb Z$.}

\begin{proposition} \label{pr4} Let $\lambda\in\Bbb T$ be of infinite order.
If for each $n>0$,
$$
\min_{f\ne f'\in F_n}|\Arg \lambda^{f'-f}|>\sum_{k>n}\max_{c,c'\in C_k}
|\Arg \lambda^{c-c'}|
$$
then $\lambda\in e(\mathcal R)$ and each $\lambda$-eigenfunction is one-to-one.
\end{proposition}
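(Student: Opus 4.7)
The plan is to mirror the proof of Proposition~\ref{pr3}, systematically replacing the chord $|1-\lambda^s|$ by the arc $|\Arg\lambda^s|$. The footnote's hint $|\Arg\lambda^s|\ge|1-\lambda^s|$ valid for every $s\in\Z$ shows that the hypothesis of the proposition implies
$$
\sum_{k>n}\max_{c,c'\in C_k}|1-\lambda^{c-c'}|<\infty
$$
for every $n\ge 0$, so, exactly as in the derivation of Corollary~\ref{co1}, the product $\prod_{k=1}^\infty\lambda^{c_k}$ converges for every sequence $(c_k)\in\prod_k C_k$. Proposition~\ref{pr2} then supplies $\lambda\in e(\mathcal R)$ together with the continuous $\lambda$-eigenfunction $\varphi$ defined by (\ref{eqq1}); as every $\lambda$-eigenfunction is proportional to $\varphi$, it is enough to verify that $\varphi$ is one-to-one.

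For injectivity I would run the contradiction argument of Proposition~\ref{pr3} verbatim, switching to arguments only at the final inequality. Assume $\varphi(x)=\varphi(y)$ with $x\ne y$, and pick $n>0$ so that $x=(f_n,c_{n+1},\ldots)$ and $y=(f_n',c_{n+1}',\ldots)$ both lie in $X_n$ with $f_n\ne f_n'\in F_n$. Formula (\ref{eqq1}) then gives
$$
\lambda^{f_n'-f_n}=\prod_{k>n}\lambda^{c_k-c_k'}.
$$
The only genuine departure from the proof of Proposition~\ref{pr3} is to take $\Arg$ on both sides. By hypothesis,
$$
\sum_{k>n}|\Arg\lambda^{c_k-c_k'}|\le\sum_{k>n}\max_{c,c'\in C_k}|\Arg\lambda^{c-c'}|<|\Arg\lambda^{f_n'-f_n}|\le\pi,
$$
so the tail of argument sums lies strictly inside $(-\pi,\pi)$. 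Consequently no $2\pi$ correction is required, and $\Arg$ of the convergent product on the right equals $\sum_{k>n}\Arg\lambda^{c_k-c_k'}$. Taking absolute values yields
$$
|\Arg\lambda^{f_n'-f_n}|\le\sum_{k>n}\max_{c,c'\in C_k}|\Arg\lambda^{c-c'}|,
$$
contradicting the hypothesis.

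The only delicate point I anticipate is the $2\pi$-ambiguity when passing from the multiplicative identity on $\T$ to an additive one in $\R$: it is precisely here that the strict upper bound by $\pi$ provided by the hypothesis is indispensable, since it rules out any wrap-around so that the chain of additive inequalities is legitimate. Everything else is a formal translation of the chord-based proof of Proposition~\ref{pr3} into the arc-length language, which is why the authors felt comfortable omitting it.
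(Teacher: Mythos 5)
Your proposal is correct and is exactly the argument the paper intends: the authors omit the proof of Proposition~\ref{pr4}, stating it is a near-verbatim modification of Proposition~\ref{pr3} via the footnoted inequality $|\Arg\lambda^s|\ge|1-\lambda^s|$, which is precisely how you obtain $\lambda\in e(\mathcal R)$ before rerunning the injectivity contradiction in arc-length terms. Your explicit handling of the $2\pi$-ambiguity (the tail sum being strictly below $\pi$, so $\Arg$ of the product equals the sum of the $\Arg$'s) is the one genuinely new point in passing from chords to arcs, and you treat it correctly.
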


Unfortunately, we were unable to figure out if  Proposition~\ref{pr4} is equivalent to~Proposition~\ref{pr3}.

\subsubsection*{Measurable eigenfunctions}
We consider now the general case, provide an eigenvalue criterion and  describe the structure of arbitrary measurable eigenfunctions  of $\mathcal R$.

\begin{proposition}\label{propeigen}
Let $\lambda\in\Bbb T$.
Then $\lambda\in e(\mathcal R)$ if and only if for each $\epsilon>0$, there is $n>0$ such that for every  $m\ge n$, there exists  a subset 
$E_{n,m}\subset C_n+\cdots+C_m$
satisfying the following:
\begin{enumerate}[label=\upshape(\roman*), leftmargin=*, widest=iii]
\item
$\dfrac{\# E_{n,m}}{\# C_n\cdots\# C_m}>1-\epsilon$ and 
\item
$\max_{c,c'\in E_{n,m}}|1-\lambda^{c-c'}|<\epsilon$.
\end{enumerate}
\end{proposition}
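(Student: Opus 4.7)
The proof will hinge on a single construction. For any $\lambda$-eigenfunction $\varphi$ the cocycle identity forces the function
\[
h_n(c_{n+1},c_{n+2},\dots) := \lambda^{-f_n}\varphi(x), \qquad x=(f_n,c_{n+1},c_{n+2},\dots)\in X_n,
\]
to depend only on the tail coordinates; setting $m>n$ one obtains the multiplicative recursion
\[
h_n = h_m\cdot\lambda^{c_{n+1}+\cdots+c_m}.
\]
Iterating axiom~(\ref{2.3}) shows that the partial-sum map $C_{n+1}\times\cdots\times C_m\to C_{n+1}+\cdots+C_m$ is a bijection, so the uniform product measure pushes forward to the uniform measure on the integer sum set. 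In this language, conditions (i) and (ii) on $E_{n,m}$ are equivalent to saying that $\lambda^{c_{n+1}+\cdots+c_m}$, viewed as a random variable on the product space, takes values within~$\e$ of some unimodular constant with probability exceeding $1-\e$.

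For the necessity, let $\mathcal F_n$ be the $\sigma$-algebra on $X$ generated by the level-$n$ cylinders $\{[f]_n:f\in F_n\}$; the union $\bigvee_n\mathcal F_n$ recovers the Borel $\sigma$-algebra on $X$. The identity $\varphi=\lambda^{f_n}h_n$ on $X_n$ gives $\Bbb E[\varphi\mid\mathcal F_n]=\lambda^{f_n}\Bbb E[h_n]$ pointwise on $X_n$, whence
\[
\|\varphi-\Bbb E[\varphi\mid\mathcal F_n]\|_{L^2(X_{n_0})}^2 = \mu(X_{n_0})\bigl(1-|\Bbb E[h_n]|^2\bigr)
\]
for any fixed $n_0$. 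The $L^2$ martingale convergence theorem on the finite-measure space $X_{n_0}$ forces $|\Bbb E[h_n]|\to 1$, so $h_n$ concentrates in probability near $w_n:=\Bbb E[h_n]/|\Bbb E[h_n]|\in\Bbb T$. Writing the recursion for a pair $n<m$ and intersecting the two concentration events yields $|\lambda^{c_{n+1}+\cdots+c_m}-w_n\overline{w_m}|<2\e$ on a set of product measure $>1-2\e$; the bijective sum map carries this to a set $E_{n+1,m}\subset C_{n+1}+\cdots+C_m$ of relative size $>1-2\e$ on which $|1-\lambda^{c-c'}|<4\e$, which after cosmetic rescaling of $\e$ is the desired $E_{n,m}$.

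For the sufficiency, set $\varphi_m(x):=\lambda^{f_m}$ on $X_m$; the embeddings $X_m\hookrightarrow X_{m+1}$ make this definition well defined, and $\varphi_m(x)/\varphi_m(y)=\lambda^{d(x,y)}$ as soon as $m$ exceeds the level of agreement of $(x,y)\in\mathcal R$. For a sequence $\e_k\downarrow 0$ take $n_0=n_0(\e_k)$ from the hypothesis, pick any representative $a_m\in E_{n_0,m}$, and put $\widetilde\varphi_m:=\lambda^{-a_m}\varphi_m$. Since both $S_m:=c_{n_0}+\cdots+c_m$ (uniform on $C_{n_0}+\cdots+C_m$ by the bijection) and the representative $a_m$ lie in $E_{n_0,m}$, property~(ii) gives $|\lambda^{S_m}-\lambda^{a_m}|<\e_k$ with probability $>1-\e_k$. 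Intersecting the corresponding events for $m,m'\ge n_0(\e_k)$,
\[
\widetilde\varphi_m/\widetilde\varphi_{m'}=\lambda^{a_{m'}-a_m}\cdot\lambda^{S_m-S_{m'}}
\]
is within $2\e_k$ of $1$ on a set of measure $>1-2\e_k$. Hence $(\widetilde\varphi_m)$ is Cauchy in measure on every $X_{n_0}$; a diagonal a.e.\ subsequential limit $\varphi$ is $\Bbb T$-valued (all $\widetilde\varphi_m$ are unimodular) and inherits the cocycle identity from each $\widetilde\varphi_m$, giving $\lambda\in e(\mathcal R)$.

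The main technical obstacle is the explicit identification $\Bbb E[\varphi\mid\mathcal F_n]=\lambda^{f_n}\Bbb E[h_n]$ on $X_n$: without this closed form, the martingale theorem would only furnish qualitative convergence, too weak to be projected onto the sum sets $C_{n+1}+\cdots+C_m$. Once that identity is in place, both implications are driven by the bijectivity of the partial-sum map supplied by axiom~(\ref{2.3}) together with a standard Cauchy-in-measure completion argument.
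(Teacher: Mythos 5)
Your sufficiency argument has a genuine gap at the step ``Hence $(\widetilde\varphi_m)$ is Cauchy in measure.'' The normalizing representative $a_m$ is chosen inside $E_{n_0(\e_k),m}$, so it depends on $k$ as well as on $m$, and $\widetilde\varphi_m$ is not a single well-defined sequence. What your intersection-of-events estimate actually yields is: for each \emph{fixed} $k$, all pairs $\widetilde\varphi^{(k)}_m,\widetilde\varphi^{(k)}_{m'}$ with $m,m'\ge n_0(\e_k)$ are $2\e_k$-close in measure. That is not a Cauchy property (the error does not improve as $m,m'\to\infty$ within fixed $k$), and switching from level $k$ to level $k+1$ multiplies $\widetilde\varphi^{(k+1)}_m$ by the constant $\lambda^{a^{(k)}_m-a^{(k+1)}_m}$, which you have not controlled: decomposing $a^{(k)}_m\in E_{n_0(\e_k),m}$ through the bijection $C_{n_0(\e_k)}+\cdots+C_m=(C_{n_0(\e_k)}+\cdots+C_{n_0(\e_{k+1})-1})+(C_{n_0(\e_{k+1})}+\cdots+C_m)$ shows this constant is close to $\lambda^{r}$ for a typical $r$ in the intermediate block, which need not be near $1$. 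So the claimed ``diagonal a.e.\ subsequential limit'' does not exist as written. The repair is a chaining argument: one checks that these transition constants are (up to $O(\e_k)$) independent of $m$, telescopes them into the normalization, and then invokes Borel--Cantelli over disjoint blocks with summable errors $\e_k=2^{-k}$. That is essentially what the paper does instead: it passes to blocks $C_{n_k}+\cdots+C_{n_{k+1}-1}$, fixes a single reference sequence $(c_j')$ whose every block sum lies in the corresponding $E$-set, and defines the eigenfunction directly as the a.e.-convergent infinite product $\prod_k\lambda^{(c_{n_k}-c_{n_k}')+\cdots+(c_{n_{k+1}-1}-c_{n_{k+1}-1}')}$, with Borel--Cantelli supplying convergence; the extension from $[0]_{n_1-1}$ to all of $X$ is then done by $\mathcal R$-saturation. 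Some version of this blocking/reference-point device is needed; the in-measure Cauchy argument alone does not close.

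Your necessity argument, by contrast, is correct and takes a genuinely different route from the paper's. The paper finds a positive-measure set $A$ on which $\varphi$ is $\e/2$-constant, a cylinder $[f]_{n-1}$ that $A$ nearly fills, defines $E_{n,m}$ by a density condition on the sub-cylinders $[f+c]_m$, and applies the eigenfunction identity pointwise on the positive-measure sets $T^{c-c'}(A\cap[f+c]_m)\cap(A\cap[f+c']_m)$. Your tail function $h_n$, the closed form $\mathbb{E}[\varphi\mid\mathcal F_n]=\lambda^{f_n}\mathbb{E}[h_n]$, and $L^2$-martingale convergence replace this with the quantitative statement $|\mathbb{E}[h_n]|\to1$, whence Chebyshev concentration of $h_n$ near $w_n$ and the exact identity $\lambda^{c_{n+1}+\cdots+c_m}=h_n\overline{h_m}$, pushed through the bijective sum map, produce $E_{n,m}$ uniformly in $m\ge n$. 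Modulo the routine observation that $h_n$ is well defined off a null set of tails, this direction is sound and arguably cleaner than the original.
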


\begin{proof} Let $\lambda\in e(\mathcal R)$ and let $\varphi$ be a $\lambda$-eigenfunction.
Then there is  $w\in\Bbb T$ and a subset $A\subset X$ of positive 
measure such that 
\begin{equation}\label{const}
|\varphi(x)-w|<\epsilon/2\qquad\text{
for each $x\in A$.}
\end{equation}
Then we can find $n>0$ and $f\in F_{n-1}$ such that 
$$
\mu(A\cap[f]_{n-1})>(1-\epsilon^2)\mu([f]_{n-1}).
$$ 
Let $E_{n,m}:=\{c\in C_n+\cdots +C_m\mid \mu(A\cap[f+c]_m)>(1-\epsilon)\mu([f+c]_m)\}$.
Since $[f]_{n-1}=\bigsqcup_{c\in C_n+\cdots +C_m}[f+c]_m$, it follows that
$\frac{\# E_{n,m}}{\# C_n\cdots\# C_m}>1-\epsilon$.
Without loss of generality we may assume that $\epsilon<0.1$.
For each pair $c,c'\in E_{n,m}$, we let  $B:=T^{c-c'}(A\cap[f+c]_m)\cap(A\cap[f+c']_m) $.
Then $\mu(B)>0$, $B\subset A$ and $T^{c'-c}B\subset A$.
Select $x\in B$ such that $\varphi(T^{c'-c}x)=\lambda^{c'-c}\varphi(x)$.
This equality and (\ref{const}) yield that
$$
\omega\pm\epsilon/2=\lambda^{c'-c}\omega\pm\epsilon/2.
$$
Hence $\lambda^{c'-c}=1\pm \epsilon$, as desired.

Conversely, suppose that (i) and (ii) are satisfied.
Then we can construct  an infinite  sequence $n_1<n_2<\cdots$ such that for each $k>0$,
\begin{itemize}
\item[(iii)]
$\dfrac{\# E_{n_k,n_{k+1}-1}}{\# C_{n_k}\cdots\# C_{n_{k+1}-1}}>1-\frac 1{2^k}$ and 
\item[(iv)]
$\max_{c,c'\in E_{n_k,n_{k+1}-1}}|1-\lambda^{c-c'}|<\frac 1{2^k}$.
\end{itemize}
We note that 
the mapping
$$
 (c_{n_1}, c_{n_1+1},\dots)\mapsto((c_{n_1}+\cdots +c_{n_2-1}),(c_{n_2}+
 \cdots +c_{n_3-1}),\dots)
$$
is a measure preserving isomorphism of 
the probability space $\Big([0]_{n_1-1}, \frac{\mu\restriction[0]_{n_1-1}}{\mu([0]_{n_1-1})}\Big)$
 onto  the infinite product space $\bigotimes_{k\ge 1}(C_{n_k}+\cdots+C_{n_{k+1}-1}, \tau_k)$, where $\tau_k$ is  the equidistribution on $C_{n_k}+\cdots+C_{n_{k+1}-1}$.
Hence, the Borel-Cantelli lemma and (iii) yield that there is a $\mu$-null subset $Y_0\subset[0]_{n_1-1}$
such that for each
$$
x=(0,c_{n_1},c_{n_1+1},\dots)\in[0]_{n_1-1}\setminus Y_0,
$$
 there is 
$K=K(x)\in\Bbb N$ with
$$
c_{n_k}+\cdots+ c_{n_{k+1}-1}\in E_{n_k, n_{k+1}-1}\qquad\text{for each $k>K$.}
$$
Fix an element  $(0,c_{n_1}',c_{n_1+1}',\dots)\in[0]_{n_1-1}$ with
$
c_{n_k}'+\cdots+ c_{n_{k+1}-1}'\in E_{n_k, n_{k+1}-1}$
for each $k>0$.
We now define a   map $\phi:[0]_{n_1-1}\to\Bbb T$ by setting
$$
\phi(x)=\prod_{k=1}^\infty\lambda^{(c_{n_k}-c_{n_k}')+\cdots+(c_{n_{k+1}-1}-c_{n_{k+1}-1}')}.
$$
It is well defined in view of (iv).
It is straightforward to verify that  if $(x,y)\in \mathcal R$ and $x,y\in [0]_{n_1-1}\setminus Y_0$ then
\begin{equation}\label{cocycle}
\phi(x)=\lambda^{d(x,y)}\phi(y).
\end{equation}
We recall that  the cocycle $d$ is defined at the beginning of this section.
Thus, $\lambda$ is an eigenvalue for the restriction  of $\mathcal R$ to $[0]_{n_1-1}$.
Let $Y$ denote the smallest $\mathcal R$-invariant subset that includes $Y_0$.
Of course, $\mu(Y)=0$.
Then $\phi$ extends to the entire $X$ in such a way that (\ref{cocycle}) holds
for all $(x,y)\in \mathcal R\cap\big( (X\setminus Y)\times (X\setminus Y)\big)$.
Indeed, given $y\in X\setminus Y$, there is (non-unique!) $x\in [0]_{n_1-1}\setminus Y_0$
such that $(x,y)\in\mathcal R$.
We now define $\phi$ at $x$ as $\lambda^{d(x,y)}\phi(y)$.
It is a routine to verify that $\phi$ is well defined and (\ref{cocycle}) holds for $\phi$ on the entire space $X\setminus Y$.
Hence $\lambda\in e(\mathcal R)$.
\end{proof}

\begin{corollary}\label{bound-eigen} Suppose that the sequence $(\# C_n)_{n=1}^\infty$ is bounded.
If $\lambda\in e(\mathcal R)$ then 
 $\max_{c\in C_n}|1-\lambda^c|\to 0$ as $n\to\infty$.
\end{corollary}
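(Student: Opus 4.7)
\begin{proof*}{Proof proposal}
My plan is to apply Proposition~\ref{propeigen} once, and then localize the information it provides about the whole block $C_{n_0}+\cdots+C_n$ down to a single factor $C_n$ via a counting (averaging) argument, using the bound on $\#C_n$ to convert an ``almost all'' statement into an ``all'' statement.

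Let $M := \sup_n \#C_n<\infty$ and fix $\epsilon>0$ with $\epsilon M<1$. Proposition~\ref{propeigen} produces $n_0$ such that for every $m\ge n_0$ there is $E_{n_0,m}\subset C_{n_0}+\cdots+C_m$ with $\#E_{n_0,m}>(1-\epsilon)\prod_{k=n_0}^m \#C_k$ and $\max_{c,c'\in E_{n_0,m}}|1-\lambda^{c-c'}|<\epsilon$. Iterating the disjointness condition \eqref{2.3} shows that every element of $C_{n_0}+\cdots+C_m$ has a unique decomposition $c_{n_0}+\cdots+c_m$ with $c_k\in C_k$, and $\#(C_{n_0}+\cdots+C_m)=\prod_{k=n_0}^m\#C_k$.

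Now fix an arbitrary $n\ge n_0$, take $m=n$, and partition $E_{n_0,n}$ by the ``prefix'' $\mathbf c=(c_{n_0},\ldots,c_{n-1})$ via $S(\mathbf c):=\{c\in C_n: c_{n_0}+\cdots+c_{n-1}+c\in E_{n_0,n}\}$. Averaging $\sum_{\mathbf c}\#S(\mathbf c)=\#E_{n_0,n}$ over the $\prod_{k=n_0}^{n-1}\#C_k$ prefixes forces some $\mathbf c^*$ to satisfy $\#S(\mathbf c^*)>(1-\epsilon)\#C_n\ge \#C_n-1$; by integrality, $S(\mathbf c^*)=C_n$. For any $c,c'\in C_n$ the two sums $c_{n_0}^*+\cdots+c_{n-1}^*+c$ and $c_{n_0}^*+\cdots+c_{n-1}^*+c'$ both lie in $E_{n_0,n}$ and differ by $c-c'$, so $|1-\lambda^{c-c'}|<\epsilon$. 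Taking $c'=0$, which belongs to each $C_n$ by the standing convention $\mathbf 0=(0,0,\ldots)\in X_0\subset X$, gives $\max_{c\in C_n}|1-\lambda^c|<\epsilon$ for every $n\ge n_0$, and letting $\epsilon\to 0$ yields the claim.

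The one delicate point is the averaging-plus-integrality step. Without boundedness of $\#C_n$ the slice $S(\mathbf c^*)$ could miss a small but positive fraction of $C_n$ and the estimate would not propagate; the hypothesis $\#C_n\le M$ is exactly what lets me choose $\epsilon<1/M$, forcing $S(\mathbf c^*)=C_n$ and thereby transferring the eigenvalue estimate from the composite set $E_{n_0,n}$ down to the single factor $C_n$.
\end{proof*}
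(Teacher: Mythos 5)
Your argument is correct and is exactly the intended derivation: the paper states this as an immediate (unproved) corollary of Proposition~\ref{propeigen}, and your slicing-plus-pigeonhole step, with $\epsilon<1/M$ forcing a full slice $S(\mathbf c^*)=C_n$ and the standing convention $0\in C_n$, is the natural way to fill in the omitted details. No gaps.
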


We consider now the case where $\lambda$ is of finite order.

\begin{corollary}\label{finiteorder} Let $\lambda\in e(T)$ be of finite order $p$.
Then there exists $n>0$ such that
for every $m\ge n$, there is a subset $C_m^0\subset C_m$ with $\frac{\# C_m^0}{\# C_m}>1-\epsilon$ and $p$ divides $c-c'$ for all $c,c'\in C_m^0$.
\end{corollary}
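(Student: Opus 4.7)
The plan is to deduce Corollary~\ref{finiteorder} from Proposition~\ref{propeigen} by a Fubini-type averaging that transfers a large subset $E_{n,m}$ of the sumset $C_n+\cdots+C_m$ down to a large subset $C_m^0\subset C_m$. The crucial feature of $\lambda$ having exact finite order $p$ is that the quantity
$$
\delta_0:=\min_{1\le j\le p-1}|1-\lambda^j|
$$
is strictly positive, since the $\lambda^j$ for $1\le j\le p-1$ are non-trivial $p$-th roots of unity. Consequently $|1-\lambda^s|<\delta_0$ forces $\lambda^s=1$, i.e.\ $p\mid s$. Once the ``angular oscillation'' inside a fiber is made smaller than $\delta_0$, divisibility by $p$ comes for free.

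First I would note the essentially tautological inclusion $e(T)\subset e(\mathcal R)$: any $T$-eigenfunction $\varphi$ with $\varphi\circ T=\lambda\varphi$ satisfies $\varphi(x)\varphi(y)^{-1}=\lambda^{d(x,y)}$ for $(x,y)\in\mathcal R$, since $T$-orbits agree $\mu$-a.e.\ with $\mathcal R$-classes. Given the target $\e>0$, I set $\e_0:=\min(\e,\delta_0/2)$ and apply Proposition~\ref{propeigen} to obtain an index $n$ such that for every $m\ge n$ there is a subset $E_{n,m}\subset C_n+\cdots+C_m$ with
$$
\frac{\#E_{n,m}}{\#C_n\cdots\#C_m}>1-\e_0\quad\text{and}\quad\max_{c,c'\in E_{n,m}}|1-\lambda^{c-c'}|<\e_0<\delta_0.
$$

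Next I would use the hypotheses (\ref{2.2}) and (\ref{2.3}), together with $0\in F_{n-1}$, to show by an easy induction that the addition map $C_n\times C_{n+1}\times\cdots\times C_m\to C_n+\cdots+C_m$ is a bijection. Thus $E_{n,m}$ may be identified with a subset of the Cartesian product, and the average of the slice sizes
$$
N(c_n,\dots,c_{m-1}):=\#\{c\in C_m:(c_n,\dots,c_{m-1},c)\in E_{n,m}\}
$$
over $(c_n,\dots,c_{m-1})\in C_n\times\cdots\times C_{m-1}$ equals $\#E_{n,m}/(\#C_n\cdots\#C_{m-1})>(1-\e_0)\#C_m$. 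Hence some tuple $(c_n^*,\dots,c_{m-1}^*)$ attains at least the average; I define $C_m^0$ to be its slice.

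Finally, for any $c,c'\in C_m^0$ the points $s:=c_n^*+\cdots+c_{m-1}^*+c$ and $s':=c_n^*+\cdots+c_{m-1}^*+c'$ both lie in $E_{n,m}$ and satisfy $s-s'=c-c'$, so $|1-\lambda^{c-c'}|<\delta_0$ and hence $p\mid c-c'$. Combined with $\#C_m^0/\#C_m>1-\e_0\ge 1-\e$, this yields the desired $C_m^0$. The only step requiring real care is the bijectivity assertion in paragraph three, since without it the averaging identity transferring density from $E_{n,m}$ to a slice in $C_m$ would fail; everything else is routine bookkeeping and the dichotomy supplied by $\delta_0$.
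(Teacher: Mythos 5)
Your proof is correct and follows essentially the same route as the paper's: both rest on Proposition~\ref{propeigen} together with the observation that, since $\lambda$ has exact order $p$, the bound $|1-\lambda^{c-c'}|<\min_{1\le j\le p-1}|1-\lambda^j|$ forces $p\mid(c-c')$. Your Fubini/averaging step passing from a dense subset $E_{n,m}$ of the sumset $C_n+\cdots+C_m$ to a dense slice inside $C_m$ is a worthwhile (and in fact needed) elaboration of the paper's terse ``it remains to apply Proposition~\ref{propeigen}'', which silently elides exactly this transfer.
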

\begin{proof} We note that $\lambda^p=1$.
 Therefore, for each $m$ and $c,c'\in C_m$, we have that $\lambda^{c-c'}=\lambda^{\widetilde c-\widetilde{c}'}$, where $0\le \widetilde c<p$, $0\le \widetilde c'<p$ and the differences 
 $c-\widetilde c$ and $c'-\widetilde c'$  are divisible by $p$.
Therefore, if $\epsilon>0$ then the equality $\lambda^{c-c'}=1\pm\epsilon$
implies $\lambda^{\widetilde c-\widetilde c'}=1\pm\epsilon$.
As the set $\{\lambda^{\widetilde c-\widetilde c'}\mid 0\le \widetilde c,c'<p\}$ is finite and $\epsilon$
is arbitrarily small, 
we obtain  that  $\lambda^{\widetilde c-\widetilde c'}=1$, i.e.
$p|(\widetilde c-\widetilde c')$ and hence  $p|(c-c')$ if $\epsilon$  is small enough.
It remains to apply  Proposition~\ref{propeigen}.
\end{proof}

\begin{remark}\label{rem3.7} Let $(X,\frak B,\mu, T)$ be a $(C,F)$-dynamical system and let $\mathcal R$ be the $(C,F)$-equivalence relation on $X$.
Then it is straightforward to verify that $e(T)=e(\mathcal R)$.
\end{remark}

\section{$(C,F)$-systems with finite invariant measure and their eigenfunctions}

In this section we consider  rank-one dynamical systems with finite invariant measure.

\subsubsection*{Solution of Problem II}

Let $\epsilon>0$.
We say that a finite subset $P\subset\Bbb T$ is an {\it $\epsilon$-net} if
$$
\max_{z\in \Bbb T}\min_{p\in P}|z-p|<\epsilon.
$$

\begin{theorem}\label{th41} Given an element $\lambda\in\Bbb T$ of infinite order, there is an explicit $(C,F)$-construction  of  a rank-one transformation $T$
with finite invariant measure and $\lambda\in e(T)$.
 \end{theorem}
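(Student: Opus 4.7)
The plan is to reduce to the eigenvalue criterion of Corollary~\ref{co1}. Since $\lambda$ has infinite order, write $\lambda=e^{2\pi i\theta}$ with $\theta\in(0,1)$ irrational, and let $(p_n/q_n)_{n\ge 1}$ be its continued-fraction convergents. Combining Fact~\ref{f1}(ii) with $|1-e^{2\pi i s}|\le 2\pi|s|$ yields $|1-\lambda^{q_n}|\le 2\pi/q_{n+1}$, so rotation by $\lambda$ is ``nearly trivial'' along each convergent denominator. I will therefore try to build a $(C,F)$-system whose cuts $C_n$ consist of small integer multiples of well-chosen convergent denominators of $\theta$: verifying (\ref{eq2.4}) yields $\mu(X)<\infty$, and the summability hypothesis of Corollary~\ref{co1}, combined with Remark~\ref{rem3.7}, yields $\lambda\in e(T)$.

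Concretely, I would pick a rapidly increasing sequence of indices $N_1<N_2<\cdots$ (adapted to the continued fraction of $\theta$) and set
\begin{equation*}
F_n=\{0,\dots,h_n{-}1\},\ \ C_{n+1}=\{j\,q_{N_n}:0\le j<k_{n+1}\},\ \ h_{n+1}=k_{n+1}q_{N_n},\ \ k_{n+1}=\lfloor q_{N_{n+1}}/q_{N_n}\rfloor.
\end{equation*}
The axioms (\ref{2.1})--(\ref{2.3}) then follow at once from $h_n=k_nq_{N_{n-1}}\le q_{N_n}$ (disjointness) and $F_n+C_{n+1}\subset\{0,\dots,k_{n+1}q_{N_n}-1\}=F_{n+1}$ (nesting). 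The two estimates driving the proof are the spacer ratio
\begin{equation*}
\frac{h_{n+1}-h_n\cdot\#C_{n+1}}{h_{n+1}}=\frac{q_{N_n}-h_n}{q_{N_n}}<\frac{q_{N_{n-1}}}{q_{N_n}}
\end{equation*}
and the eigenvalue tail
\begin{equation*}
\max_{c\in C_{n+1}}|1-\lambda^c|\le(k_{n+1}-1)\cdot\frac{2\pi}{q_{N_n+1}}\le \frac{2\pi\,q_{N_{n+1}}}{q_{N_n}\,q_{N_n+1}}.
\end{equation*}

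To force both series to converge, I would pin down $N_{n+1}$ so that $q_{N_{n+1}}\in[2^n q_{N_n},\,2^{-n}q_{N_n}q_{N_n+1}]$; each of the displayed quantities is then $O(2^{-n})$. The interval has log-width $\log(q_{N_n+1}/4^n)\to\infty$ thanks to the Fibonacci lower bound $q_m\ge\varphi^m$ (with $\varphi$ the golden ratio), so it becomes non-empty for all $n\ge n_0(\theta)$, and the starting indices can be computed explicitly from the continued fraction of $\theta$.

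The principal obstacle I anticipate is guaranteeing that the prescribed interval actually contains a convergent denominator $q_M$ for \emph{every} irrational $\theta$: consecutive denominators may differ by the factor $a_{M+1}+1$, which is unbounded when $\theta$ is well approximable, so the sequence $(q_m)$ may skip over the window. My plan for handling this is to loosen the recipe slightly --- if no convergent falls inside the window then the preceding partial quotient is so large that $p_{M-1}/q_{M-1}$ itself is an exceptional approximation of $\theta$, and a single ``corrector'' of the form $(k_{n+1}-1)q_{N_n}+q_M$ can be appended to $C_{n+1}$ whose eigenvalue cost $|1-\lambda^{q_M}|\le 2\pi/q_{M+1}$ remains summable because such exceptional stages are Diophantically rare. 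With this two-gear adaptation, Corollary~\ref{co1} applies and the resulting $(C,F)$-transformation $T$ has finite invariant measure with $\lambda\in e(T)$, as desired.
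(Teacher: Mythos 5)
Your skeleton (Corollary~\ref{co1} plus Remark~\ref{rem3.7}, summable spacer ratio for \eqref{eq2.4}, summable eigenvalue tail) is the right frame, and the two displayed estimates are correct. But the proof has a genuine gap exactly where you flag it, and the proposed patch does not close it. Your construction forces $q_{N_{n+1}}$ to lie in the window $[2^n q_{N_n},\,2^{-n}q_{N_n}q_{N_n+1}]$, and there is no slack in either endpoint: by Fact~\ref{f1}(ii) one also has $|1-\lambda^{q_{N_n}}|>c/q_{N_n+1}$, so the eigenvalue tail for an arithmetic progression of step $q_{N_n}$ and length $k_{n+1}\approx q_{N_{n+1}}/q_{N_n}$ is genuinely of order $q_{N_{n+1}}/(q_{N_n}q_{N_n+1})$ from below as well, while the lower endpoint is needed for \eqref{eq2.4}. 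Consecutive denominators satisfy $q_{M+1}\le (a_{M+1}+1)q_M$, so the window is skipped precisely when $a_{K+1}$ is small (pushing $q_{K+1}$ below $2^nq_K$ and the window top down to about $q_K^2$) and a huge partial quotient $a_{M+1}\gtrsim 4^{-n}q_{K+1}$ sits at the first $M$ with $q_M\ge 2^nq_K$. For a fixed Liouville-type $\theta$ such configurations can occur at the positions your induction actually reaches; ``Diophantically rare'' has no content for a single prescribed $\theta$, which is the setting of the theorem. The corrector you suggest --- appending one extra element $(k_{n+1}-1)q_{N_n}+q_M$ to $C_{n+1}$ --- does not help, because the problem is not a missing element but that when the window is empty there is no admissible value of $k_{n+1}$ at all: either $k_{n+1}$ is too small (spacer ratio not summable) or the phases $\lambda^{jq_{N_n}}$, $0\le j<k_{n+1}$, drift by a non-summable amount, and the maximum in Corollary~\ref{co1} is taken over all of $C_{n+1}$, not just the appended point. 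To salvage this route you would have to prove a denominator-hitting lemma for every irrational $\theta$ (or switch, mid-construction, to a different step size when a huge partial quotient is encountered), and that argument is not present.

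It is worth noting that the paper's proof avoids this entirely and uses no continued fractions. It relies only on the pigeonhole fact that for each $n$ there is $j_n\le n$ with $|1-\lambda^{j_n}|<2\pi/n$, so that $\{\lambda^{j_nk}\}$ is a $\tfrac{2\pi}{n}$-net; after each column of height $h_{n-1}$ it inserts a \emph{variable} number $k_lj_{n^2}$ of spacers, chosen adaptively so that the accumulated phase $\lambda^{a(l)}$ returns to within $2\pi/n^2$ of $1$ at every element of $C_n$. The stopping rule $h_{n-1}>n^4/\delta_{n^2}$ then makes the spacer proportion $O(n^{-2})$ regardless of the Diophantine type of $\lambda$. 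This adaptive phase correction is exactly the flexibility your rigid arithmetic progressions lack, and it is why the paper's argument covers every $\lambda$ of infinite order uniformly, whereas the convergent-based approach runs into the well-approximable case --- which is also why the paper needs a substantially more delicate construction (Theorem~4.7, with good and bad indices) to get the stronger conclusion of Problem~I in that regime.
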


\begin{proof}
It is well known (and easy to verify) that for each $n\in\Bbb N$, there is a number $j_n\in\{1,\dots,n\}$ such that $\delta_n:=|1-\lambda^{j_n}|<\frac{2\pi}{n}$.
Hence the  subset
$$
P(\lambda,n):=\bigg\{\lambda^{j_nk}\mid 1\le k\le \frac{2\pi}{\delta_n}\bigg\}\subset\Bbb T
$$
 is a $\frac{2\pi}{n}$-net.
 
 We now construct inductively a sequence $(C_n,F_{n-1})_{n=1}^\infty$ of subsets in $\Bbb Z$.
 We recall that $F_0=\{0\}$ and $F_n=\{0,1,\dots, h_n-1\}$ for some $h_n\in\Bbb N$ and each $n\in\Bbb N$.
Suppose we have already determined $(C_k, F_k)_{k=1}^{n-1}$ for some $n>0$.
Suppose, in addition,  that an auxiliary condition 
\begin{equation}\label{eq4.1}
h_{n-1}>\frac{n^4}{\delta_{n^2}}
\end{equation}
is satisfied.
Our purpose is to define $C_n$ and $F_n$ (or, equivalently, $h_n$).
Since $\lambda^{h_{n-1}}\cdot P(\lambda,n^2)$ is  a $\frac{2\pi}{n^2}$-net, there is   a positive integer $k_1\le \frac{2\pi}{\delta_{n^2}}$ such that  
$$
|1-\lambda^{h_{n-1}+k_1j_{n^2}}|<\frac{2\pi}{n^2}.
$$
We now set $a(1):=h_{n-1}+k_1j_{n^2}$.
In a similar way,  there is   a positive integer $k_2\le \frac{2\pi}{\delta_{n^2}}$ such that
$$
|1-\lambda^{a(1)+h_{n-1}+k_2j_{n^2}}|<\frac{2\pi}{n^2}.
$$
We now set $a(2):=a(1)+h_{n-1}+k_2j_{n^2}$.
Continuing this process  infinitely many times we obtain an infinite  sequence $a(1), a(2), \dots$
such that 
\begin{gather}
\label{eq4.2}
a(l)=a(l-1)+h_{n-1}+k_{l}j_{n^2}\ \text{ for some positive  
$k_{l}\le \frac{2\pi}{\delta_{n^2}}$ and}\\
\label{eq4.3}
|1-\lambda^{a(l)}|<\frac{2\pi}{n^2}
\end{gather}
for each $l>0$.
Let $r_n$ be the smallest $l$ such that $a(r_n-1)+h_{n-1}>\frac{(n+1)^4}{\delta_{(n+1)^2}}$.
Then we set
\begin{align*}
 C_n&:=\{0,a(1), a(2),\dots, a(r_n-1)\}\quad\text{and}\\
 h_n&:= a(r_n-1)+h_{n-1}.
 \end{align*}
Thus, we defined $C_n$ and $F_n$.
Moreover,  (\ref{eq4.1}) holds if we replace $n-1$ with $n$. 
Continuing this process infinitely many times, we
obtain the entire sequence $(C_n,F_n)_{n=1}^\infty$.
It is straightforward to check that (\ref{2.1})--(\ref{2.3}) are satisfied for this sequence.
Hence the associated $(C,F)$-dynamical system $(X,\frak B,\mu, T)$ is well defined.
It is of rank one.

We now verify that $\mu(X)<\infty$.
Indeed, applying (\ref{eq4.1}) and (\ref{eq4.2}) and using the fact that $j_{n^2}\le n^2$ we obtain that
$$
\frac{h_n-h_{n-1}r_n}{h_n}=\frac{(k_1+\cdots+k_{r_n-1})j_{n^2}}{h_n}\le
\frac{2\pi r_n n^2}{\delta_{n^2}h_{n-1} r_n}<\frac{2\pi}{n^2}.
$$
Hence $\sum_{n=1}^\infty\frac{h_n-h_{n-1}r_n}{h_n}<\infty$.
By (\ref{eq2.4}), $\mu(X)<\infty$, as desired.

It remains to check that $\lambda\in e(T)$.
It follows from (\ref{eq4.3}) that
$$
\sum_{n=1}^\infty\max_{c\in C_n}|1-\lambda^c|<\sum_{n=1}^\infty\frac{2\pi}{{n^2}}<\infty.
$$
Hence, Corollary~\ref{co1} and Remark~\ref{rem3.7} yield that $\lambda\in e(T)$.
\end{proof}

Thus, Problem~II
 is solved.

\subsubsection*{Boundedness of the number of  cuts}
In this subsection we refine Theorem~\ref{th41} for a.e. $\lambda\in\Bbb T$.

Given  $\lambda\in\Bbb T$  and  $n\in\Bbb N$, 
we let 
\begin{gather*}
\delta_n(\lambda):=\min_{j\in\{1,\dots,n\}}|1-\lambda^j|\qquad\text{and}\\
E:=\bigcup_{N\in\Bbb N}\bigcap_{n\in\Bbb N}\Big\{\lambda\in\Bbb T\mid \frac{n^4}{N2^{n-1}}<\delta_{n^2}(\lambda)\Big\}.
\end{gather*}

Of course, if $\lambda\in E$ then $\lambda$ is of infinite order in $\Bbb T$.

\begin{theorem}\label{th42} Given $\lambda\in E$, there is an explicit $(C,F)$-construction  of  a rank-one finite measure preserving transformation $T$
such that  $\lambda\in e(T)$ and $\# C_n=2$ for each $n\in\Bbb N$.
 \end{theorem}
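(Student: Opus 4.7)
The plan is to specialize the $(C,F)$-construction of Theorem~\ref{th41} to the case of a single nontrivial translate per step, so that $C_n=\{0,a_n\}$. With only two cuts the heights $h_n$ can grow only geometrically (essentially by doubling), and the point of the defining condition of $E$ is precisely that it pins the required ``Diophantine cushion'' $n^4/\delta_{n^2}(\lambda)$ below a geometric sequence. In other words, the exponential growth $h_n\gtrsim 2^n$ forced by $\#C_n=2$ is \emph{exactly} the rate compatible with $\lambda\in E$.

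Concretely: fix $N\in\mathbb N$ with $\delta_{n^2}(\lambda)>n^4/(N2^{n-1})$ for every $n$, and start the induction from some step at which $h_{n_0-1}\ge 2N$ and $h_{n_0-1}>n_0^4/\delta_{n_0^2}(\lambda)$; this can be arranged by choosing the first few $C_k$'s to have one sufficiently large spacer translate. Given $(C_k,F_{k-1})_{k<n}$ with $h_{n-1}>n^4/\delta_{n^2}(\lambda)$, pick $j_{n^2}\in\{1,\dots,n^2\}$ realizing $\delta_{n^2}(\lambda)$, and, exactly as in the first paragraph of the proof of Theorem~\ref{th41}, choose $k_n\le 2\pi/\delta_{n^2}(\lambda)$ so that
\[
|1-\lambda^{h_{n-1}+k_nj_{n^2}}|<\frac{2\pi}{n^2}.
\]
Set $a_n:=h_{n-1}+k_nj_{n^2}$, $C_n:=\{0,a_n\}$, and $h_n:=a_n+h_{n-1}=2h_{n-1}+k_nj_{n^2}$. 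The axioms \eqref{2.1}--\eqref{2.3} follow at once from $a_n\ge h_{n-1}$ and $h_n=a_n+h_{n-1}$.

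To continue the induction, $h_n\ge 2h_{n-1}$, whence $h_n\ge 2^{n-n_0}h_{n_0-1}\ge N2^n$; combined with the defining inequality of $E$ this yields $h_n>(n+1)^4/\delta_{(n+1)^2}(\lambda)$, preserving the cushion. For finite invariant measure we estimate
\[
\frac{h_n-2h_{n-1}}{h_n}=\frac{k_nj_{n^2}}{h_n}\le\frac{2\pi n^2/\delta_{n^2}(\lambda)}{h_{n-1}}<\frac{2\pi}{n^2},
\]
so $\mu(X)<\infty$ by \eqref{eq2.4}. Finally, since $\max_{c\in C_n}|1-\lambda^c|=|1-\lambda^{a_n}|<2\pi/n^2$, the series $\sum_n\max_{c\in C_n}|1-\lambda^c|$ converges, and Corollary~\ref{co1} together with Remark~\ref{rem3.7} gives $\lambda\in e(T)$.

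The main obstacle is the tension between the eigenvalue requirement and the finite measure requirement under the constraint $\#C_n=2$. The former forces $|1-\lambda^{a_n}|$ to be summable, and given the lower bound $a_n\ge h_{n-1}$ needed for \eqref{2.3}, the only way to locate such an $a_n$ is to have a power of $\lambda$ extremely close to $1$ in the range $[h_{n-1},(\text{const})h_{n-1}]$; this forces $h_{n-1}$ to dominate $1/\delta_{n^2}(\lambda)$. The finite-measure requirement, on the other hand, caps $h_n/h_{n-1}$ at a bounded ratio, hence $h_n\asymp 2^n$. Reconciling the two inequalities is possible precisely when $1/\delta_{n^2}(\lambda)$ grows at most geometrically in $n$, i.e.\ when $\lambda\in E$; everything else is bookkeeping borrowed from Theorem~\ref{th41} with $r_n\equiv 2$.
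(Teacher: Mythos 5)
Your proposal is correct and follows essentially the same route as the paper: the paper likewise reruns the construction of Theorem~\ref{th41} with the stopping rule (\ref{eq4.1}) replaced by $h_{n-1}>N2^n$, which forces $r_n=2$, and then uses the defining inequality of $E$ to obtain both $\mu(X)<\infty$ and the summability of $|1-\lambda^{a_n}|$. The only (trivially repairable) slip is in the bookkeeping of the induction base: $h_{n_0-1}\ge 2N$ only yields $h_n\ge N2^{\,n-n_0+1}$, not $N2^n$, so you should instead require $h_{n_0-1}\ge N2^{\,n_0}$ (or, as the paper does, take $h_{n-1}>N2^n$ itself as the stopping condition).
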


\begin{proof}
Let
$\lambda\in\bigcap_{n\in\Bbb N}\{\lambda\in\Bbb T\mid \frac{n^4}{N2^{n-1}}
<\delta_{n^2}(\lambda)\}$ for some $N\in\Bbb N$.
We then repeat the construction in the proof of Theorem~\ref{th41}  almost verbally
but replace the ``stopping time'' condition (\ref{eq4.1}) with the following one:
\begin{equation}\label{eq4.4}
h_{n-1}>N2^n.
\end{equation}
We use below the same notation as in the proof of Theorem~\ref{th41}.
Since $a(1)=h_{n-1}+k_1j_{n^2}$, it follows that $a(1)+h_{n-1}>2h_{n-1}>N2^{n+1}$.
Hence $r_n=2$.
We recall that $\# C_n=r_n$.
Utilizing (\ref{eq4.4}) and the definition of $\lambda$ we obtain that
$$
\frac{h_n-h_{n-1}r_n}{h_n}<\frac{k_1n^2}{2h_{n-1}}<\frac{\pi n^2}{\delta_{n^2(\lambda)}h_{n-1}}
<\frac{\pi}{2n^2}.
$$
Hence $\sum_{n=1}^\infty\frac{h_n-h_{n-1}r_n}{h_n}<\infty$.
Therefore, if $(X,\mu, T)$ stands for the associated $(C,F)$-dynamical system then $\mu(X)<\infty$.
We also have that $|1-a(1)|<\frac{2\pi}{n^2}$.
Therefore
$$
\sum_{n=1}^\infty\max_{c\in C_n}|1-\lambda^{c}|=\sum_{n=1}^\infty|1-\lambda^{c_n}|
<\sum_{n=1}^\infty\frac{2\pi}{{n^2}}<\infty,
$$
where $c_n$ is determined by the equality $C_n=\{1,c_n\}$. 
 Hence $\lambda\in e(T)$.
 Thus, the theorem is proved completely.
\end{proof}

Our next purpose is to show that $E$ is ``large''.
Denote by $\tau$ the Haar measure on $\Bbb T$.

\begin{proposition} $\tau(E)=1$.
\end{proposition}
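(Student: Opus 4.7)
The plan is to exhibit $E$ as an increasing union and then show that the complement of each term has vanishing Haar measure. For $N\in\Bbb N$ set
$$
A_N := \bigcap_{n\in\Bbb N}\Big\{\lambda\in\Bbb T \;:\; \delta_{n^2}(\lambda)>\frac{n^4}{N2^{n-1}}\Big\},
$$
so that $E=\bigcup_{N\ge 1}A_N$ and the sets $A_N$ are nested (a larger $N$ imposes a weaker lower bound on $\delta_{n^2}$). Continuity of $\tau$ from below reduces the problem to proving $\tau(A_N^c)\to 0$ as $N\to\infty$.

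To estimate $\tau(A_N^c)$, I would unfold the definition of $\delta_{n^2}$: the event $\{\delta_{n^2}(\lambda)\le\epsilon\}$ is exactly the union, over $j\in\{1,\dots,n^2\}$, of the subsets $\{\lambda\in\Bbb T:|1-\lambda^j|\le\epsilon\}$. For each fixed $j\ge 1$, the map $\lambda\mapsto\lambda^j$ is a Haar-measure-preserving $j$-to-one covering of $\Bbb T$, so
$$
\tau\big(\{\lambda\in\Bbb T:|1-\lambda^j|\le\epsilon\}\big)=\tau\big(\{z\in\Bbb T:|1-z|\le\epsilon\}\big)\le\epsilon
$$
for $\epsilon\le 2$, by the standard arc-length bound for the complex circle.

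Combining a union bound over $j\le n^2$ and then over $n\ge 1$, I would obtain
$$
\tau(A_N^c)\le\sum_{n=1}^{\infty}n^2\cdot\frac{n^4}{N2^{n-1}}=\frac{1}{N}\sum_{n=1}^{\infty}\frac{n^6}{2^{n-1}},
$$
and the last series converges (geometric decay beats polynomial growth), so $\tau(A_N^c)=O(1/N)\to 0$, which gives $\tau(E)=1$. The only genuinely non-formal step is the uniform-in-$j$ arc estimate, which is immediate from Haar-invariance of the power map; once that is in place the rest is routine bookkeeping with a convergent series, so I do not foresee a real obstacle.
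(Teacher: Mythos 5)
Your proof is correct. The key estimate is the same as the paper's: a union bound over $j\le n^2$ combined with the fact that $\lambda\mapsto\lambda^j$ pushes Haar measure to Haar measure, so each set $\{\lambda:|1-\lambda^j|\le\epsilon\}$ has measure at most (a constant times) $\epsilon$. Where you diverge is in the bookkeeping. The paper fixes the $N=1$ thresholds, sets $D_n:=\{z:\exists k\le n^2,\ |z^k-1|<n^4/2^{n-1}\}$, shows $\sum_n\tau(D_n)<\infty$, and invokes Borel--Cantelli to conclude that a.e.\ $z$ lies in only finitely many $D_n$; it then needs the extra observation that $\delta_{n^2}(z)>0$ for every $n$ when $z$ has infinite order, in order to absorb the finitely many exceptional indices by enlarging $N$. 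You instead write $E$ as the increasing union of the sets $A_N$ and bound $\tau(A_N^c)\le N^{-1}\sum_n n^6/2^{n-1}=O(1/N)$ directly, so continuity from below finishes the argument with no Borel--Cantelli and no appeal to the infinite-order condition. Your route is slightly more economical and gives a quantitative rate for $\tau(A_N)$; the paper's route is the more standard ``summable exceptional sets'' pattern. Both are complete proofs, and your arc-length estimate $\tau(\{z:|1-z|\le\epsilon\})\le\epsilon$ is justified exactly as you say (and is harmless even when $n^4/(N2^{n-1})>2$, since the series still converges).
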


\begin{proof}
Let
$$
D_n:=\Big\{z\in\Bbb T\mid\exists k\in \Bbb N, k\le n^2\text{ with }|z^k-1|
<\frac{n^4}{2^{n-1}}\Big\}.
$$
Since the  map $\Bbb T\ni z\mapsto z^k\in\Bbb T$ preserves $\tau$, we obtain that
$$
\tau(D_n)\le\sum_{k=1}^{n^2}\tau\Big(\Big\{z\in\Bbb T:|z^k-1|<\frac{n^4}{2^{n-1}}\Big\}\Big)\le
2\pi\sum_{k=1}^{n^2}\frac{n^4}{2^n}<2\pi\frac{n^6}{2^n}.
$$
Hence $\sum_{n=1}^\infty\tau(D_n)<\infty$.
The Borel-Cantelli lemma implies that $\tau$-a.e. $z\in\Bbb T$ there is $M>0$ such that
$z\not\in D_n$ for each $n>M$.
Thus, 
$ \frac{n^4}{2^{n-1}}<\delta_{n^2}(z)$ for each $n>M$.
Of course, $\delta_{n^2}(z)>0$ for each $n\in\Bbb N$
if $z$ is of infinite order in $\Bbb T$.
Hence there exists  $N>0$ such that 
$ \frac{n^4}{N2^{n-1}}<\delta_{n^2}(z)$ for each $n\in\Bbb N$, i.e. $z\in E$.
\end{proof}

\begin{proposition} 
If an irrational $\theta\in(0,1)$ is badly approximable or algebraic then $e^{2\pi i\theta}\in E $.
\end{proposition}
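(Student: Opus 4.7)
The plan is to reduce the membership $\lambda = e^{2\pi i\theta}\in E$ to a Diophantine lower bound on $\|q\theta\|:=\min_{p\in\Bbb Z}|q\theta-p|$, and then to invoke Fact~\ref{f1}(v) or (vi).

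First I would translate $\delta_n(\lambda)$ into a Diophantine quantity. Since $|1-e^{2\pi ix}|=2|\sin(\pi x)|$ and $\sin(\pi t)\ge 2t$ for $t\in[0,\tfrac12]$, one has the elementary inequality
\begin{equation*}
|1-\lambda^j|=2|\sin(\pi j\theta)|\ge 4\|j\theta\|\qquad\text{for every }j\in\Bbb N.
\end{equation*}
Consequently
\begin{equation*}
\delta_{n^2}(\lambda)\ge 4\min_{1\le j\le n^2}\|j\theta\|.
\end{equation*}

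Next, I would apply the appropriate Diophantine hypothesis to lower-bound this minimum polynomially in $n$. If $\theta$ is badly approximable, Fact~\ref{f1}(v) provides $\delta>0$ with $\|j\theta\|>\delta/j$ for all $j\in\Bbb N$, hence for $1\le j\le n^2$ we obtain $\|j\theta\|>\delta/n^2$, so $\delta_{n^2}(\lambda)>4\delta/n^2$. If instead $\theta$ is algebraic of degree $d$, Fact~\ref{f1}(vi) gives a $\delta>0$ with $\|j\theta\|>\delta/j^d$, whence $\delta_{n^2}(\lambda)>4\delta/n^{2d}$. In either case there exist constants $c>0$ and an integer $K\ge 2$ (depending only on $\theta$) such that
\begin{equation*}
\delta_{n^2}(\lambda)>\frac{c}{n^K}\qquad\text{for every }n\in\Bbb N.
\end{equation*}

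Finally I would compare this polynomial lower bound with the exponentially small quantity $n^4/(N2^{n-1})$. Since $n^{K+4}/2^{n}\to 0$ as $n\to\infty$, the sequence $\bigl(2n^{K+4}/(c\,2^{n})\bigr)_{n\in\Bbb N}$ is bounded by some integer $N_0$, and for this choice $N=N_0$ one has
\begin{equation*}
\frac{n^4}{N_0\,2^{n-1}}\le\frac{c}{n^K}<\delta_{n^2}(\lambda)\qquad\text{for every }n\in\Bbb N,
\end{equation*}
which is exactly the condition for $\lambda\in E$. The only subtlety is checking that the elementary $\sin$-inequality gives a linear (and not merely constant) relationship between $|1-\lambda^j|$ and $\|j\theta\|$; once that is in hand, everything reduces to the fact that $2^n$ eventually dominates any polynomial in $n$, so no part of the argument is genuinely delicate.
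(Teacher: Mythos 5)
Your proof is correct and follows essentially the same route as the paper: apply Fact~\ref{f1}(v) (resp.\ (vi)) to get a polynomial lower bound $\delta_{n^2}(\lambda)>c/n^K$, then observe that $n^4/(N2^{n-1})$ eventually falls below any such bound for a suitable $N$. You merely make explicit two steps the paper leaves implicit, namely the inequality $|1-e^{2\pi ix}|\ge 4\|x\|$ and the polynomial-versus-exponential comparison.
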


\begin{proof} It follows from Fact~\ref{f1}(v) that if $\theta$ is badly approximable then there
 is a real $d>0$ such that
$|e^{2\pi i\theta q}-1|>\frac{d}{q}$ for each $q\in\Bbb N$.
Hence $\delta_{n^2}(e^{2\pi i\theta})>\frac d{n^2}$.
This  yields that  $e^{2\pi i\theta}\in E$, as desired.

A similar argument  ``works'' for an algebraic  $\theta$ if one refers to 
Fact~\ref{f1}(vi).
\end{proof}

The  $(C,F)$-parameters $(C_n,F_{n-1})_{n=1}^\infty$ are called {\it bounded} if the sequences
$(\# C_n)_{n=1}^\infty$ and $(\# F_{n}-\# F_{n-1}\#C_n)_{n=1}^\infty$ are both bounded 
(see \cite{El--Ru},  \cite{Ry}, \cite{Da5}).
It follows from the structural theorems \cite[Theorem~3]{El--Ru} or
 \cite[Theorem~M]{Da5} that if $T$ is a $(C,F)$-transformation with bounded parameters then every
 $\lambda\in e(T)$ is of finite order in $\Bbb T$.
 We now provide a short direct  proof of this fact.

\begin{proposition} If $T$ is a $(C,F)$-transformation with bounded parameters
and $\lambda\in e(T)$ then $\lambda$ is of finite order.
\end{proposition}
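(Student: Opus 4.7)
The plan is to argue by contradiction, assuming $\lambda$ has infinite order. Bounded parameters gives $\#C_n \leq r^*$ and $h_n - r_n h_{n-1} \leq M$ for all $n$, writing $h_n := \#F_n$ and $r_n := \#C_n$. My first step is to extract the internal structure of each $C_n$. Ordering $C_n = \{c_{n,1} < \cdots < c_{n,r_n}\}$, the disjointness condition (\ref{2.3}) together with (\ref{2.2}) forces $c_{n,i+1} - c_{n,i} \geq h_{n-1}$, and the identity
\[
h_n - r_n h_{n-1} = c_{n,1} + \sum_{i=1}^{r_n-1}\bigl(c_{n,i+1}-c_{n,i}-h_{n-1}\bigr) + \bigl(h_n - c_{n,r_n} - h_{n-1}\bigr)
\]
writes the bounded defect as a sum of non-negative integers, so each summand lies in $\{0, 1, \ldots, M\}$. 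Denote the middle terms by $g_{n,i}$.

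Next, I combine $\lambda \in e(T) = e(\mathcal{R})$ (Remark~\ref{rem3.7}) with Corollary~\ref{bound-eigen} to obtain $\max_{c \in C_n}|1-\lambda^c| \to 0$. Since $\lambda$ has infinite order, the quantity $\delta := \min\{|1-\lambda^k| : 0 \neq k, |k| \leq M\}$ is strictly positive. For $n$ large, $|1-\lambda^{c_{n,1}}| < \delta$ with $c_{n,1} \in \{0,\ldots, M\}$ forces $c_{n,1}=0$. Similarly, $\lambda^{c_{n,i+1}-c_{n,i}} = \lambda^{h_{n-1}+g_{n,i}}$ tending to $1$ uniformly in $i$ implies $\lambda^{g_{n,j}-g_{n,i}} \to 1$ uniformly in $i,j$, which forces $g_{n,i} \equiv g_n$ to be independent of $i$. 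Setting $q_n := h_{n-1} + g_n$, this gives
\[
C_n = q_n\{0,1,\ldots,r_n-1\}, \qquad \lambda^{q_n} \to 1.
\]

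A direct calculation from $h_n = r_n h_{n-1} + (\text{defect})$ and $h_{n-1} = q_n - g_n$ yields $h_n = r_n q_n + \sigma_n$ with $\sigma_n$ bounded, whence $q_{n+1} = h_n + g_{n+1} = r_n q_n + \tau_n$ for a bounded integer $\tau_n$. Then $\lambda^{\tau_n} = \lambda^{q_{n+1}}(\lambda^{q_n})^{-r_n} \to 1$, and by the same infinite-order argument (applied with a slightly enlarged range), $\tau_n = 0$ for all $n \geq n_0$. Consequently $q_n = q_{n_0} R_n$ for $n \geq n_0$, where $R_n := \prod_{k=n_0}^{n-1} r_k$ and $r_k \geq 2$ by (\ref{2.1}). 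Now set $w := \lambda^{q_{n_0}} = e^{i\beta}$ with $\beta \in (-\pi,\pi]$, and let $\beta_n \in (-\pi,\pi]$ represent $R_n \beta \pmod{2\pi}$; then $e^{i\beta_n} = \lambda^{q_n} \to 1$, so $\beta_n \to 0$. Choose $n_1 \geq n_0$ with $|\beta_n| < \pi/r^*$ for all $n \geq n_1$. Then $|r_n \beta_n| \leq \pi$, so no modular reduction occurs and $\beta_{n+1} = r_n \beta_n$, giving $|\beta_{n+1}| \geq 2|\beta_n|$. Iterating, $2^k |\beta_{n_1}| \leq |\beta_{n_1+k}| < \pi/r^*$ for all $k \geq 0$, which forces $\beta_{n_1} = 0$, hence $\lambda^{q_{n_1}} = 1$, contradicting infinite order.

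The main obstacle is the second step: upgrading the weak consequence ``$\lambda^c$ close to $1$ for each $c \in C_n$'' (Corollary~\ref{bound-eigen}) into the rigid arithmetic-progression description $C_n = q_n\{0,\ldots,r_n-1\}$ and the accompanying identity $\lambda^{q_n} \to 1$. Once this description and the induced recurrence $q_{n+1} = r_n q_n$ from Step~3 are in hand, the final doubling-in-the-circle argument is a clean finale exploiting $r_n \geq 2$.
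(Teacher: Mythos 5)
Your proof is correct; I checked each step. The telescoping identity in Step 1 does express the defect $h_n-r_nh_{n-1}$ as a sum of non-negative integers (using $F_{n-1}+C_n\subset F_n$ to get $c_{n,1}\ge 0$, $h_n-c_{n,r_n}-h_{n-1}\ge 0$, and gaps $\ge h_{n-1}$ from~(\ref{2.3})), Corollary~\ref{bound-eigen} with Remark~\ref{rem3.7} legitimately upgrades to the rigid description $C_n=q_n\{0,\dots,r_n-1\}$ for large $n$ because all the integer discrepancies live in a fixed finite set on which $|1-\lambda^k|$ is bounded below for $k\ne 0$, and the recurrence $q_{n+1}=r_nq_n$ plus the doubling estimate $|\beta_{n+1}|=r_n|\beta_n|\ge 2|\beta_n|$ inside $(-\pi/r^*,\pi/r^*)$ forces $\lambda^{q_{n_1}}=1$, the desired contradiction. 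Your route is, however, genuinely different from (and considerably longer than) the paper's. The paper also starts from Corollary~\ref{bound-eigen} and the same ``bounded integer with $\lambda^k$ near $1$ must vanish'' principle, but applies it to a single quantity: with $c_n$ the least positive element of $C_n$, bounded parameters give $|c_{n+1}-c_n\#C_n|\le 2M+M^2$, and the contradiction is read off immediately from $|\lambda^{c_{n+1}}-\lambda^{c_n\#C_n}|=|\lambda^{c_{n+1}-c_n\#C_n}-1|\ge\delta$ against $\lambda^{c_n}\to 1$. That two-line finish tacitly assumes $c_{n+1}\ne c_n\#C_n$; when the difference vanishes (as it does, for instance, in a spacerless odometer-type construction where $c_{n+1}=h_n=r_nh_{n-1}=c_n\#C_n$), the inequality $\ge\delta$ fails and the paper's argument gives nothing. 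Your Steps 1--3 show that for large $n$ this degenerate situation is essentially the only one that can survive, and your final doubling argument is exactly the tool that disposes of it. In short: the paper's proof buys brevity by skipping the exceptional case; yours buys completeness by deriving the full arithmetic-progression structure of $C_n$ and then exploiting $r_n\ge 2$ on the circle.
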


\begin{proof}
We prove this by contraposition.
Suppose that $\lambda\in e(T)$ is of infinite order
and
 $$
 M:=\sup_{n\in\Bbb N}\max(\#C_n,\#F_n - \#F_{n-1}\#C_n)<\infty.
$$
 For each $n\in\Bbb N$, we denote  by $c_n$ the least positive element of $C_n$.
 Then, of course, 
$
   0\le  c_{n+1} - \#F_{n}  \leq M.
$
Therefore,
\begin{align*}
|c_{n+1} - c_n\#C_n| &\le
|c_{n+1} - \#F_n| + |\#F_n - \#F_{n-1}\#C_n| + \#C_n|\#F_{n-1} - c_n| \\
&\le M + M + M^2.
\end{align*}
Let 
$
\delta := \min \{|\lambda^k - 1| :  k = 1,2,\ldots, 2M+M^2\}.
$
Then $\delta>0$ and
for each $n\in\Bbb N$,
\begin{equation}\label{eqr}
|\lambda^{c_{n+1}} - \lambda^{c_n\#C_n}| 
=|\lambda^{c_{n+1} - c_n\#C_n} - 1| \geq \delta.
\end{equation}
On the other hand, 
$
  \lim_{n\to\infty}  |\lambda^{c_n} - 1|=0
$
by Corollary~\ref{bound-eigen}.
Passing to the limit in (\ref{eqr}), we obtain  that 
$|\lambda^{c_{n+1}} - \lambda^{c_n\#C_n}|\to 0$, a contradiction.
\end{proof}

\subsubsection*{Solution of Problem I for the well approximable irrationals}
This subsection is devoted entirely to the proof of the following theorem.

\begin{theorem} Let $\lambda=e^{2\pi i\theta}$ for a   well approximable irrational $\theta\in(0,1)$.
There is an explicit $(C,F)$-construction of a rank-one  probability preserving dynamical 
system $(X, \frak B,\mu,T)$
such that $\lambda\in e(T)$ and the corresponding $\lambda$-eigenfunctions are one-to-one continuous maps from $X$ to $\Bbb T$.
\end{theorem}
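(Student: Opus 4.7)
The plan is to extend the construction from the proof of Theorem~\ref{th41} so that the spacer sets $C_n$ consist of small integer multiples of denominators $q_{m_n}$ of carefully selected convergents of $\theta$; Proposition~\ref{pr3} will then automatically upgrade the resulting $\lambda$-eigenfunction from merely ``well defined'' to ``continuous and one-to-one''. The single use of well approximability is to extract a rapidly growing subsequence of continued fraction indices $m_1<m_2<\cdots$ for which the partial quotients $a_{m_n+1}$ (equivalently, the ratios $q_{m_n+1}/q_{m_n}$) can be prescribed to be as large as desired. By Fact~\ref{f1}(ii), this forces $\|q_{m_n}\theta\|\le 1/q_{m_n+1}$ to be arbitrarily small, so the integers $q_{m_n}$ serve as almost-periods of $R_\lambda$: $|1-\lambda^{jq_{m_n}}|\lesssim 2\pi j/q_{m_n+1}$.

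With $(m_n)$ in hand I would set $F_0=\{0\}$ and, inductively, arrange $h_{n-1}$ to equal (or lie just below) $q_{m_n}$; this makes condition~\eqref{2.3} automatic when I put
\begin{equation*}
C_n:=\{jq_{m_n}:0\le j\le s_n-1\}\quad\text{and}\quad h_n:=h_{n-1}+(s_n-1)q_{m_n},
\end{equation*}
where the number of cuts $s_n$ is tuned so that $h_n$ is also aligned with the next anchor $q_{m_{n+1}}$. The finite-measure condition~\eqref{eq2.4} reduces to summability of $(s_n-1)(q_{m_n}-h_{n-1})/h_n$, ensured because $q_{m_n}-h_{n-1}$ is deliberately kept small.

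That $\lambda\in e(T)$ will then follow from Corollary~\ref{co1} combined with Remark~\ref{rem3.7}, since $\max_{c\in C_n}|1-\lambda^c|\lesssim 2\pi s_n/q_{m_n+1}$ is summable once the $a_{m_n+1}$ grow fast enough along the subsequence. For continuity and injectivity of the eigenfunction I would verify the hypothesis of Proposition~\ref{pr3} at every step $n$. Its left-hand side $\min_{1\le j\le h_n-1}|1-\lambda^j|$ equals $|1-\lambda^{q_M}|$ for the largest convergent denominator $q_M\le h_n-1$, which is $q_{m_n}$ provided $h_n<q_{m_n+1}$; this gives LHS of order $1/q_{m_n+1}$. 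The right-hand side is the tail $\sum_{k>n}\max_{c,c'\in C_k}|1-\lambda^{c-c'}|$ of terms bounded by $2\pi s_k/q_{m_k+1}$; by choosing the subsequence $(m_n)$ greedily, with $q_{m_{n+1}+1}$ dominating $s_{n+1}\cdot q_{m_n+1}$ by any fixed large factor, the tail at level $n$ can be made strictly smaller than the head.

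The main obstacle is the simultaneous balancing, at every step, of three conflicting constraints on $m_n$: (i) $q_{m_n}$ must only slightly exceed $h_{n-1}$ (otherwise~\eqref{eq2.4} fails and the total measure blows up), (ii) $a_{m_n+1}$ must be very large (otherwise the spacer errors $|1-\lambda^c|$ are too crude), and (iii) $h_n$ must stay strictly below $q_{m_n+1}$ (otherwise the minimum on the left of Proposition~\ref{pr3} drops to $\|q_{m_n+1}\theta\|$, which is ruinously small). Well approximability supplies exactly the flexibility needed to satisfy (i)--(iii) at infinitely many scales, but turning this flexibility into the explicit strict inequality of Proposition~\ref{pr3} at every $n$ via an inductive bookkeeping argument is where the substance of the proof lies. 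Once the inequality holds, the \emph{a posteriori} identification of $T$ with $R_\lambda$ comes for free: the resulting injective continuous equivariant $\varphi\colon X\to\Bbb T$ has a compact, hence closed, $R_\lambda$-invariant image, which must coincide with all of $\Bbb T$ by minimality of the irrational rotation.
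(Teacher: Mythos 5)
Your overall strategy (anchor the heights $h_n$ at convergent denominators $q_{m_n}$ chosen via well approximability, then invoke Proposition~\ref{pr3} or \ref{pr4} for a continuous injective eigenfunction) is the paper's strategy, but the specific choice $C_n=\{jq_{m_n}:0\le j\le s_n-1\}$ cannot satisfy the hypothesis of Proposition~\ref{pr3}, and the tension you flag in (i)--(iii) is a genuine incompatibility rather than a bookkeeping difficulty. The measure condition forces $s_n\approx q_{m_{n+1}}/q_{m_n}$, and Fact~\ref{f1}(ii) is two-sided: $|q_{m_n}\theta-p_{m_n}|>1/(q_{m_n}+q_{m_n+1})>1/(2q_{m_n+1})$. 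Hence the diameter of $\lambda^{C_n}$ grows linearly in $\# C_n$ and
$$
\max_{c,c'\in C_n}|1-\lambda^{c-c'}|\ \gtrsim\ \frac{s_n-1}{q_{m_n+1}}\ \gtrsim\ \frac{q_{m_{n+1}}}{q_{m_n}q_{m_n+1}}\ \ge\ \frac1{q_{m_n}},
$$
since $q_{m_{n+1}}\ge q_{m_n+1}$. The left-hand side of Proposition~\ref{pr3} at level $n-1$ is $2\pi|q\theta-p|$ for the largest convergent denominator $q<h_{n-1}\approx q_{m_n}$, which is at most of order $1/q_{m_n}$. So the single tail term $k=n$ already matches the head (the exact recursion $|q_{m-1}\theta-p_{m-1}|=a_{m+1}|q_m\theta-p_m|+|q_{m+1}\theta-p_{m+1}|$ shows it is essentially equal to it), and the remaining tail destroys the strict inequality. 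Your attempted fix via a ``greedy'' choice of $(m_n)$ does not help: constraints (i) and (iii) jointly force $h_n<q_{m_n+1}$ and $h_n\approx q_{m_{n+1}}$, hence $m_{n+1}=m_n+1$ for all $n$, which would require \emph{every} partial quotient from some point on to be large --- far stronger than well approximability, which only yields an unbounded subsequence of the $a_k$.

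The missing idea is that $C_n$ must \emph{not} be an arithmetic progression in $q_{m_{n-1}}$: its image in $\Bbb T$ must have diameter bounded independently of $\# C_n$. The paper achieves this with a feedback rule: increments of $h_{n-1}=q_{m_{n-1}}$ are applied as long as $|\Arg\lambda^{b(j)}|<\frac12|\Arg\lambda^{q_{m_{n-1}-1}}|$, and once the argument drifts past that threshold a correction $q_{m_{n-1}-1}+2h_{n-1}$ is inserted; since $\Arg\lambda^{q_{m-1}}$ and $\Arg\lambda^{q_m}$ have opposite signs (Fact~\ref{f1}(i)), this pulls the argument back, and all of $\lambda^{C_n}$ stays in an arc of length $|\Arg\lambda^{q_{m_{n-1}-1}}|-|\Arg\lambda^{q_{m_{n-1}}}|$, independent of $\# C_n$. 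Well approximability is then used exactly where your sketch does not use it: the choice $|\Arg\lambda^{q_{m_k-1}}|/|\Arg\lambda^{q_{m_k}}|>2k^2$ guarantees that corrections occur at most once every $(k-1)^2$ steps, so the extra spacers keep the measure finite, and a telescoping estimate turns the tail sum into something strictly below $|\Arg\lambda^{q_{m_n-1}}|\le\min_{f\ne f'\in F_n}|\Arg\lambda^{f-f'}|$. (A minor further point: your closing surjectivity argument is both unnecessary and flawed --- $X$ is only locally compact, so $\varphi(X)$ need not be closed; injectivity of $\varphi$ together with unique ergodicity of $R_\lambda$ already identifies $\varphi_*\mu$ with Haar measure.)
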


\begin{proof}
Let $\theta=[a_0;a_1,\dots]$ stand for the expansion of $\theta$ into the continued fraction.
Let $(\frac{p_k}{q_k})_{k=1}^\infty$   be the sequence of  convergents for $\theta$.
 It follows from Fact~\ref{f1}(ii) that 
  \begin{equation}\label{ineq4.5}
 \frac{|\theta q_n-p_n|}{|\theta q_{n+1}-p_{n+1}|}>\frac{q_{n+2}}{q_n+q_{n+1}}
 =\frac{a_{n+2}q_{n+1}+q_n}{q_{n+1}+q_n}\ge\max\Big(\frac{a_{n+2}}2,1\Big)
  \end{equation}
  for each $n\in\Bbb N$.
Since 
\begin{equation}\label{eqdop}
|\theta q_n-p_n|\to 0 \qquad\text{as $n\to\infty$},
\end{equation}
we obtain that 
 \begin{equation}\label{equ4.6}
|\Arg \lambda^{q_n}|=|\Arg e^{2\pi i(\theta q_n-p_n)}|=2\pi|\theta q_n-p_n|.
 \end{equation}
As $\theta$ is well approximable, the sequence $(a_n)_{n=1}^\infty$ is unbounded.
Hence, in view of (\ref{ineq4.5})--(\ref{equ4.6}), there exists an infinite sequence $m_1<m_2<\cdots$ of positive integers such that
\begin{align}
\label{ineq4.6}
  \dfrac{|\Arg \lambda^{q_{m_k-1}}|}{|\Arg \lambda^{q_{m_k}}|} &> \max\{2 k^2, 4\}\text{  and }\\
  \label{final}
  |\Arg \lambda^{q_{m_{k-1}}}| &>|\Arg \lambda^{q_{m_k-1}}|
  \end{align}
 We have to construct a sequence $(C_n,F_{n-1})_{k=1}^\infty$ satisfying (\ref{2.1})--(\ref{2.3}).
As above,  $F_0=\{0\}$ and $F_n=\{0,1,\dots,h_n-1\}$ for some $h_n> 0$. 
We now let $h_n:=q_{m_n}$ for each $n\in\Bbb N$.
Thus, the sequence $(F_n)_{n=0}^\infty$ is determined completely.
It remains to construct $(C_n)_{n=1}^\infty$.
This will be done inductively.
Suppose that we  already defined $(C_k)_{k=1}^{n-1}$ for some $n\in\Bbb N$.
Our purpose is to specify $C_n$.
For that, we first construct an auxiliary sequence $(b(j))_{j=0}^\infty$ of positive integers.
Let $b(0):=0$.
The other terms of this sequence will be specified  in an inductive way.
Suppose that we already have $b(1),\dots,b(N)$  for  some $N > 0$. 
 If
$$
|\Arg \lambda^{b(N)}| < \dfrac{1}{2}|\Arg \lambda^{q_{m_{n-1}-1}}|
$$
then we call  $N$ {\it good for $b$} and set $b(N+1) := b(N) + h_{n-1}$.  
  If
$$
|\Arg \lambda^{b(N)}| \ge \dfrac{1}{2}|\Arg \lambda^{q_{m_{n-1}-1}}|
$$
then we call  $N$ {\it bad for $b$} and set  $b(N+1) := b(N) + q_{m_{n-1}-1} + 2h_{n-1}$.
Continuing this process infinitely many times we specify the entire sequence 
$(b(j))_{j=0}^\infty$.
Of course, $b(0)<b(1)<\cdots$ and hence $b(j)\to\infty$ as $j\to\infty$.
Let $r_n$ be the  greatest  $j>0$ such that $b(j-1)+h_{n-1}< h_n$.
Then we set
$$
C_n:=\{b(0),\dots, b(r_n-1)\}.
$$
Repeating this construction infinitely many times, we obtain the infinite sequence $(C_k)_{k=1}^\infty$.
It follows from the construction
that (\ref{2.1})--(\ref{2.3}) are satisfied for  $(C_k,F_{k-1})_{k=1}^\infty$.
Hence a $(C,F)$-dynamical system $(X,\frak  B,\mu, T)$ associated with this sequence is well defined.
We have to show that:
\begin{enumerate}[label=\upshape\bf{(\roman*)}, leftmargin=*, widest=iii]
\item
$\mu(X)<\infty$,
\item
$\lambda\in e(T)$ and
\item
the $\lambda$-eigenfunctions of $T$ are continuous and one-to-one.
\end{enumerate}
Prior to this, we 
 fix $n\in\Bbb N$ and analyze some properties of $C_n$ and $F_n$.
For brevity, we will write $m$ for $m_{n-1}$.
To be specific, we assume that $\Arg \lambda^{q_m} > 0$.
(The other case is considered in a similar way.)
Then, in view of  Fact~\ref{f1}(i), $\Arg \lambda^{q_{m-1}} < 0$. 
Denote by $M_1<M_2<\cdots$ the sequence of natural numbers that are bad for $b$. 
We also set $M_0:=-1$.
Then  
$(\Arg\lambda^{b(j)})_{j=M_l+1}^{M_{l+1}}$ is an arithmetic progression with common difference
$\Arg\lambda^{q_m}$ for each $l\ge 0$.
According to our construction,
\begin{gather*}
\max_{0\le j< M_1}\Arg\lambda^{b(j)}<-\frac12\Arg\lambda^{q_{m-1}},\\
-\frac12\Arg\lambda^{q_{m-1}}\le\Arg \lambda^{b(M_1)}<-\frac12\Arg\lambda^{q_{m-1}}+\Arg\lambda^{q_m},\\
\frac12\Arg\lambda^{q_{m-1}}+2\Arg\lambda^{q_m}\le\Arg \lambda^{b(M_1+1)}<\frac12\Arg\lambda^{q_{m-1}}+3\Arg\lambda^{q_m}
\end{gather*}
and so on.
It follows that
for each $j\ge 0$,
$$
    \Arg \lambda^{b(j)} \in \Big[\dfrac{1}{2}\Arg \lambda^{q_{m-1}} + 
    2\Arg \lambda^{q_{m}}, -\dfrac{1}{2}\Arg \lambda^{q_{m-1}} + \Arg \lambda^{q_{m}}\Big).
$$
We illustrate this by  Figure~1 below.

\begin{figure}[H]
\centerline{\includegraphics[width=14cm,height=3.6cm, angle=0]{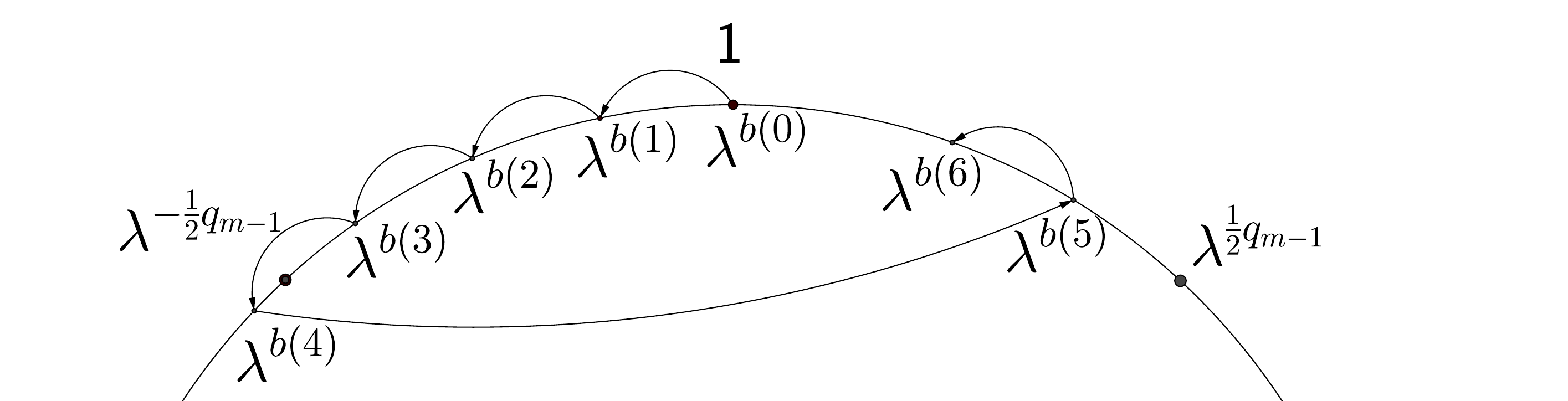}}
\caption{$i = 1, 2, 3, 5, 6$ are good for $b$, and $i = 4$ is bad for $b$.}
\label{fig}
\end{figure}

Hence for all $c,c'\in C_n$,
\begin{equation} \label{aux_con3}
|\Arg\lambda^{c-c'}|=|\Arg \lambda^c-\Arg \lambda^{c
'}|< |\Arg \lambda^{q_{m-1}}|-|\Arg\lambda^{q_{m}}|.
\end{equation}
Applying (\ref{ineq4.6}), we also obtain that for each $j>0$, 
\begin{equation*}\label{eqqM}
\begin{aligned}
M_{j+1}-M_j &\ge\dfrac{\Big(-\dfrac{1}{2}\Arg \lambda^{q_{m}-1}\Big) -
 \Big(\dfrac{1}{2}\Arg \lambda^{q_{m-1}} + 3\Arg \lambda^{q_m}\Big)}{\Arg \lambda^{q_m}} \\
&=\dfrac{-\Arg \lambda^{q_{m-1}}}{\Arg \lambda^{q_m}} - 3  \\
&=\dfrac{|\Arg \lambda^{q_{m-1}}|}{|\Arg \lambda^{q_m}|} -3  \\
&>2(n-1)^2 - 3  \\
&>(n-1)^2
\end{aligned}
\end{equation*}
if $n\ge 3$. 
In a similar way,
$$
   M_1\ge  \dfrac{-\dfrac{1}{2}\Arg \lambda^{q_{m-1}}}{\Arg \lambda^{q_m}} = 
  \dfrac{|\Arg \lambda^{q_{m-1}}|}{2|\Arg \lambda^{q_m}|} > 
    (n-1)^2.
$$
Thus, there are at least $(n-1)^2$ good  integers (for $b$) between every two subsequent bad ones,  and  there are at least $(n-1)^2$ good  integers before the first bad one.
Hence
\begin{equation}\label{eqq}
h_n\ge Bq_m(n-1)^2,
\end{equation}
where  $B$ is the number of bad integers  that are less than $r_n$.
On the other hand,
$$
 h_{n}-h_{n-1}r_n= (q_m+q_{m-1})B+ (h_n-b(r_n-1)-h_{n-1}).
$$
By the definition of $r_n$,
$$
b(r_n-1)+h_{n-1} < h_n\le b(r_n)+h_{n-1}.
$$
Hence $h_n-b(r_n-1)-h_{n-1}\le  b(r_n)-b(r_n-1)\le q_{m-1}+2q_m$.
Therefore
$$
 h_{n}-h_{n-1}r_n\le (q_m+q_{m-1})B+q_{m-1}+2q_m.
$$
This inequality and (\ref{eqq})
yield that
\begin{equation}\label{eq4.9}
\begin{aligned}
\frac{ h_{n}-h_{n-1}r_n}{h_n}&\le\frac{(q_m+q_{m-1})B+q_{m-1}+2q_m}{B(n-1)^2q_m}\\
&\le\frac{2Bq_m+3Bq_m}{B(n-1)^2q_m}\\
&=\frac 5{(n-1)^2}.
\end{aligned}
\end{equation}
If
 $f, f'\in F_n$ then
$
    -q_{m_n} <f_n^{\prime}-f_n <q_{m_n} 
$
and
$$
\min_{f\ne f'\in F_n}|\Arg\lambda^{f-f'}|=\min_{0<k<q_{m_n}}|\Arg e^{2\pi ik\theta}|.
$$
For each positive $k<q_{m_n}$, there is $l_k\ge 0$ such that $|\Arg e^{2\pi ik\theta}| =2\pi|k\theta-l_k|$.
We claim that 
$
|k\theta-l_k|\ge |q_{m_n-1}\theta-p_{m_n-1}|.
$
Indeed, if this inequality does not hold then there is $k\in\{1,\dots, q_{m_n}-1\}$ such that
$$
|k\theta-l_k|< |q_{m_n-1}\theta-p_{m_n-1}|.
$$
It follows from Fact~\ref{f1}(iii) that $k>q_{m_n-1}$.
We now select 
$$
q\in\{q_{m_n-1}+1,\dots, q_{m_n}-1\}
$$
 such that
\begin{equation}\label{extra}
|q\theta-l_q|=\min_{q_{m_n-1}+1<k<q_{m_n}-1}|k\theta-l_k|<|q_{m_n-1}\theta-p_{m_n-1}|.
\end{equation}
Consider two cases.
If $q_{m_{n-1}}<a\le q$ and $b\in\Bbb Z$ then
$$
|a\theta-b|\ge |a\theta-l_a|\ge |q\theta-l_q|.
$$
Moreover, $|a\theta-b|= |q\theta-l_q|$ if and only if $a=q$ and $b=l_q$ and hence $\frac ba=\frac{l_q}q$.

 If $q_{m_{n-1}}\ge a\ge 1$ and $b\in\Bbb Z$ then, in view of Fact~\ref{f1}(iii) and (\ref{extra}),
 $$
|a\theta-b|\ge|q_{m_n-1}\theta-p_{m_n-1}|>|q\theta-l_q|.
$$
Therefore we deduce from  Fact~\ref{f1}(iv) that $\frac {l_q}q$ is a convergent for $\theta$.
This contradicts to the fact that 
$q_{m_n-1}<q< q_{m_n}$.
Thus, we proved that
$$
|k\theta-l_k|\ge |q_{m_n-1}\theta-p_{m_n-1}|.
$$
Therefore,
$$
  2\pi|k\theta-l_k|\ge 2\pi|q_{m_{n-1}}\theta-p_{m_n-1}|=|\Arg \lambda^{q_{m_n-1}}|.
    $$
Hence,
\begin{equation}\label{ineq4.11}
\min_{f\ne f'\in F_n}|\Arg\lambda^{f-f'}|\ge |\Arg \lambda^{q_{m_n-1}}|.
\end{equation}

We are now  ready to verify {\bf (i)}--{\bf (iii)}.
It follows from (\ref{eq4.9}) that 
$$
\sum_{n=1}^\infty\frac{ h_{n}-h_{n-1}r_n}{h_n}<\infty.
$$
Hence $\mu(X)<\infty$ by (\ref{eq2.4}).
Next, for each $n\in\Bbb N$, we utilize (\ref{aux_con3}) infinitely many times to obtain
\begin{equation}\label{port}
\sum_{k>n}\max_{c,c'\in C_k}|\Arg \lambda^{c-c'}|\le\sum_{k>n} (|\Arg \lambda^{q_{m_{k-1}-1}}|
-|\Arg\lambda^{q_{m_{k-1}}}|).
\end{equation}
Since (\ref{final}) holds, it follows that 
$$
\xi:=\sum_{k\ge n} (|\Arg \lambda^{q_{m_k}}|-|\Arg\lambda^{q_{m_{k+1}-1}}|)>0.
$$
Therefore
$$
\sum_{k\ge n} (|\Arg \lambda^{q_{m_k-1}}|-|\Arg\lambda^{q_{m_k}}|)=|\Arg \lambda^{q_{m_n-1}}|-
\xi<|\Arg \lambda^{q_{m_n-1}}|.
$$
This inequality, (\ref{port}) and (\ref{ineq4.11}) yield that
$$
\min_{f\ne f'\in F_n}|\Arg \lambda^{f'-f}|> \sum_{k>n}\max_{c,c'\in C_k}|\Arg \lambda^{c-c'}|.
$$
Hence, by Proposition~\ref{pr4}, 
$\lambda\in e(T)$ and each $\lambda$-eigenfunction
is continuous and one-to-one.
\end{proof}

\section{$(C,F)$-systems with infinite invariant measure and irrational rotations}

\begin{proof}[Proof of Theorem  B]

We define a sequence $(C_k,F_{k-1})_{k\ge 1}$  inductively. 
As in the previous section, we  assume that $F_0 := \{0\}$ and
$F_{k}=\{0,1,\dots,h_{k}-1\}$ for some $h_{k}>0$ for every $k\in\Bbb N$.
Suppose that we have already constructed $(C_k)_{k=1}^{n-1}$ and $(h_k)_{k=1}^{n-1}$ for some $n>0$.
Our purpose is to define  $C_{n}$ and $h_n$.
Since $\lambda^{h_{n-1}}$ is of infinite order in $\Bbb T$,
we can  select $q_n>1$  so that
\begin{equation}\label{first}
\min_{1\le k\le n}\min_{f\ne f'\in F_k}|\lambda^{f-f'} - 1|> 2^n\max_{|j|<2^n}|1-\lambda^{jq_nh_{n-1}}|.
\end{equation}
We now set
\begin{align*}
C_n &:= \{0,q_{n}h_{n-1},2q_{n}h_{n-1},\dots, (2^n-1)q_{n}h_{n-1}\}\text{ \ and }\\
h_n&:=2^nq_{n}h_{n-1}.
\end{align*}
Continuing this procedure infinitely times we define the entire sequences
$(h_k)_{k\ge 0}$  (and hence $(F_k)_{k\ge 0}$) and $(C_k)_{k\ge 1}$.
It is straightforward to verify that~(\ref{2.1})--(\ref{2.3}) are satisfied for these sequences.
Hence the associated $(C,F)$-system $(X,\frak B,\mu,T)$ is well defined.
As $h_n>2 h_{n-1}\# C_n$, it follows from (\ref{eq2.4}) that $\mu(X)=\infty$.
Since $C_n$ is an arithmetic progression and $\# C_n\to\infty$, we deduce from 
Fact~\ref{fact2.1} that $T$ is rigid.
For each $n\in\Bbb N$, it follows from (\ref{first}) that
$$
\sum_{k>n}\max_{c,c'\in C_k}|1-\lambda^{c-c'}|<\sum_{k>n}\frac 1{2^k}\min_{f\ne f'\in F_n}|\lambda^{f-f'} -1|< \min_{f\ne f'\in F_n}|\lambda^{f-f'} -1|.
$$
Then
 Proposition~\ref{pr3} yields that $\lambda\in e(T)$
and each $\lambda$-eigenfunction $f$ of $T$ is one-to-one.
We now set $\mu_\lambda:=\mu\circ f^{-1}$.
Then $f$ is a measure preserving isomorphism of $(X,\mu, T)$ onto $(\Bbb T, \mu_\lambda, R_\lambda)$.

It remains to  choose another $(C,F)$-sequence $(C_k',F_{k-1}')_{k\ge 1}$ with $\# C_k'=2$ for each $k\in\Bbb N$ in such a way that the associated $(C,F)$-system is isomorphic to $(X,\mu, T)$.
To this end, for each  $n\in\Bbb N$ and $k\in\{0,\dots,n-1\}$, we let 
$$
C_{n,k}:=\{0,2^kq_nh_{n-1}\}.
$$
It is easy to see that for each $n\in\Bbb N$,
$$
C_n= C_{n,0}+\cdots +C_{n,n-1}.
$$
We  now define the sequences $(C_k')_{k=1}^\infty$ and $(h_{k}')_{k=0}^\infty$
by listing their terms as follows:
\begin{gather*}
C_1, C_{2,0}, C_{2,1}, C_{3,0}, C_{3,1}, C_{3,2}, C_{4,0},\dots\qquad\text{ and}\\
h_0, 2q_1h_0,2q_2h_1, 2^2q_2h_1, 2q_3h_2, 2^2q_3h_2,2^3q_3h_2,\dots
\end{gather*}
respectively.
We also let $F_k':=\{0,1,\dots, h_k'-1\}$.
It is routine to check that~(\ref{2.1})--(\ref{2.3}) are satisfied for  $(C_k',F_{k-1}')_{k\ge 1}$.
Of course, the associated $(C,F)$-system $(X',\mu',T')$ is canonically isomorphic to 
$(X,\mu, T)$.\footnote{This follows from the fact that $(C_k,F_{k-1})_{k\ge 1}$ is a {\it telescoping} of $(C_k',F_{k-1}')_{k\ge 1}$. See \cite[\S1.4]{Da5} for details.}
Thus, Theorem~B is proved completely.
\end{proof}

\begin{proof}[Proof of Theorem C]
Fix a partition of $\Bbb N$ into infinitely many infinite subsets $\mathcal N_p$, $p\ge 2$.
Utilizing an inductive argument, we can construct a sequence $(C_n,F_{n-1})_{n=1}^\infty$ such that
 for each $n>0$,
\begin{enumerate}[label=\upshape(\roman*), leftmargin=*, widest=iii]
\item
$F_n=\{0,\dots,h_n-1\}$ for some $h_n>0$ and $\# C_n>1$,
\item
$F_n+F_n+C_{n+1}\subset F_{n+1}$,
\item
the sets $F_n-F_n+c-c'$, $c\ne c'\in C_{n+1}$, and $F_n-F_n$ are all mutually disjoint,
\item
$\# C_n\to\infty$ as $n\to\infty$ and
\item
$
\min_{1\le k\le n}\min_{f\ne f'\in F_k}|\lambda^{f-f'} - 1|> 2^{n+1}\max_{c\in C_n}
|1-\lambda^{c}|.
$
\item If $n\in\mathcal N_p$ for some $p\ge 2$ then there are two subsets $C_n^{(1)}$ and
$C_n^{(2)}$ of $C_n$ such that $\# C_n^{(1)}>0.3\# C_n$, $\# C_n^{(2)}>0.3\# C_n$,
every element of  $C_n^{(1)}$ is divisible by $p$, and  $c\equiv 1\pmod p$ for each $c\in C_n^{(2)}$.
\end{enumerate}
This sequence is constructed in an inductive way.
Suppose that we have already constructed a finite sequence $(C_k,F_k)_{k=1}^{n}$ satisfying (i)--(vi)  for some $n>1$. 
Our purpose is to define $C_{n+1}$ and $h_{n+1}$.
Fix $p>2$ such that $n+1\in\mathcal N_p$.
We now define inductively an auxiliary increasing sequence $(a(j))_{j=0}^\infty$ of non-negative integers.
Let $a(0):=0$.
Suppose that there is $l>0$ such that the elements $(a(j))_{j=1}^l$ have been defined and
\begin{enumerate}[label=\alph*)]
\item
the sets $F_n-F_n+a(i)-a(j)$, $i\ne j\in\{0,\dots,l\}$ and $F_n-F_n$ are mutually disjoint  and
\item
$
\min_{1\le k\le n}\min_{f\ne f'\in F_k}|\lambda^{f-f'} - 1|> 2^{n+1}\max_{1\le j\le l}
|1-\lambda^{a(j)}|.
$
\end{enumerate}

Consider separately 2 cases. 
Suppose first that $l+1$ is even.
Since the set $\{\lambda^{pq}\mid q\in\Bbb N\}$ is dense in $\Bbb T$, we can find $Q\in\Bbb N$ such that
\begin{itemize}
\item
$pQ>3(h_n+a(l))$ and 
\item
$\min_{1\le k\le n}\min_{f\ne f'\in F_k}|\lambda^{f-f'} - 1|> 2^{n+1}
|1-\lambda^{pQ}|.$
\end{itemize}
We then put $a(l+1):=pQ$.

Suppose now that $l$ is even. 
Since the set $\{\lambda^{pq+1}\mid q\in\Bbb N\}$ is dense in $\Bbb T$, we can find $Q\in\Bbb N$ such that
\begin{itemize}
\item
$pQ+1>3(h_n+a(l))$ and
\item
$\min_{1\le k\le n}\min_{f\ne f'\in F_k}|\lambda^{f-f'} - 1|> 2^{n+1}
|1-\lambda^{pQ+1}|.$
\end{itemize}
In this case put $a(l+1):=pQ+1$.

We note that in the two cases 
a) and b) hold with $l+1$ in place   of $l$.

Continuing these construction steps infinitely many times, we define an infinite sequence
$(a(j)_{j=0}^\infty$.
We now set $C_{n+1}:=\{a(0),a(1),\dots, a(n+1)\}$.
Choose $h_{n+1}$ large so that  (ii) is satisfied.
Thus, we defined $C_{n+1}$ and $F_{n+1}$ in a such a way that (i)--(v) are satisfied.
It remains to check that (vi) holds.
Let $C_{n+1}^{(1)}=\{a(j)\in C_{n+1}\mid \text{$j$ is even}\}$ and
$C_{n+1}^{(2)}=\{a(j)\in C_{n+1}\mid \text{$j$ is odd}\}$.
Then of course, $\# C_{n+1}^{(1)}>0.3\# C_{n+1}$ and 
$\# C_{n+1}^{(2)}>0.3\# C_{n+1}$, every element of  $C_{n+1}^{(1)}$ is divisible by $p$, and  $c\equiv 1\pmod p$ for each $c\in C_{n+1}^{(2)}$,
i.e. (vi) holds.
Thus, we defined the entire sequence $(C_n,F_{n-1})_{n=1}^\infty$ and (i)--(vi) hold for each $n\in\Bbb N$.

It follows from (i)--(iii)  that~(\ref{2.1})--(\ref{2.3}) are satisfied.
Therefore the associated $(C,F)$-system $(X,\mu, T)$ is well defined.
It follows from (ii) that $h_{n+1}>2h_{n}\# C_{n+1}$ for each $n>0$.
Hence $\mu(X)=\infty$ by (\ref{eq2.4}).
We deduce from (ii)--(iv) 
that $T$ is of zero type in view of Fact~\ref{fact2.2}.
Finally, (v) implies that the condition of Proposition~\ref{pr3} is satisfied.
Hence Proposition~\ref{pr3}
yields that $\lambda\in e(T)$ and every $\lambda$-eigenfunction $f:X\to\Bbb T$ of $T$ is one-to-one.
Let $\mu_\lambda':=\mu\circ f^{-1}$.
Then the dynamical system $(\Bbb T,\mu_\lambda', R_\lambda)$ is of rank one (with explicitly determined cutting-and-stacking parameters) and of zero type.
Hence, by the main result of \cite{RyTh},
$C(R_\lambda)=\{R_\lambda^n\mid n\in\Bbb Z\}$.
On the other hand, it is well known (and easy to verify) that
$$
C(R_\lambda)=\{R_\xi,\  \text{for }\xi\in\Bbb T\mid \mu_\lambda'\circ R_\xi=\mu_\lambda'\}.
$$
Moreover, since $R_\lambda$ is ergodic, it follows that for each $\xi\in\Bbb T$, we have that either
$\mu_\lambda'\circ R_\xi\sim\mu_\lambda'$ or 
$\mu_\lambda'\circ R_\xi \perp \mu_\lambda'$ \cite[9.22]{Na}.
We thus obtain that
$$
\Bbb T\setminus\{\lambda^n\mid n\in\Bbb Z\}
=\{\xi\in\Bbb T\mid \mu_\lambda'\circ R_\xi\perp \mu_\lambda'\}.
$$

Suppose that $e(T)$ contains a torsion  $\lambda$ of order $p\ge 2$.
Then by Corollary~\ref{finiteorder}, there exist $n>0$ and  subset $C_m^0\subset C_m$ for each $m\ge n$
such that $\# C_m^0>0.9\# C_m$ and $c\equiv c'\pmod p$ for all $c,c'\in C_m^0$.
This contradicts to~(vi).
Hence  $e(T)$ is torsion free.
Therefore
$T$ is totally ergodic.

It remains to prove the final claim of Theorem~C.
Let $\omega\in\Bbb T\setminus\{\lambda,\lambda^{-1}\}$ be of infinite order.
Suppose first that there are $n,m\in\Bbb Z$ such that $\lambda^n=\omega^m$.
Then $\mu'_\omega$ and $\mu'_\lambda$ are two measures on $\Bbb T$ that are invariant under
the same transformation $R_\lambda^n=R_\omega^m$.
Moreover, this transformation is ergodic with respect to each of these two measures because the systems 
 $(\Bbb T,\mu'_\lambda, R_\lambda)$ and $(\Bbb T,\mu'_\omega,R_\omega)$ are totally ergodic.
Hence either $\mu'_\omega\perp\mu'_\lambda$ or $\mu'_\omega\sim\mu'_\lambda$.
If the latter holds then  
$\{z\in\Bbb T\mid \mu'_\lambda\circ R_z\sim \mu'_\lambda\}=\{z\in\Bbb T\mid \mu'_\omega\circ R_z\sim\mu'_\omega\}$.
Hence $\{\lambda^l\mid l\in\Bbb Z\}=\{\omega^l\mid l\in\Bbb Z\}$.
This is only possible if either $\omega=\lambda$ or $\omega=\lambda^{-1}$.
Thus, we obtain a contradiction.
Hence $\mu'_\omega\perp\mu'_\lambda$, as claimed.

Suppose now that $\omega$ and $\lambda$ are independent and $\mu'_\omega\not\perp\mu'_\lambda$.
Then there is a Borel subset  $A\subset\Bbb T$ such that
$
\mu_\lambda'(A)>0 \text{ and }  (\mu_\lambda'\restriction A)\prec\mu'_\omega.
$
Then for each $n\in\Bbb N$,
$$
(\mu_\lambda'\restriction A)\circ R_\lambda^n\prec\mu'_\omega\circ R_\lambda^n\text{ and }
\mu'_\omega\circ R_\lambda^n\perp \mu'_\omega.
$$
Hence $
(\mu_\lambda'\restriction A)\circ R_\lambda^n\perp \mu'_\omega$, i.e. 
$(\mu_\lambda'\restriction R_\lambda^{-n}A)\perp \mu'_\omega$.
It follows that 
$$
\bigg(\mu_\lambda'\restriction \bigcup_{n>0}R_\lambda^{-n}A\bigg)\perp \mu'_\omega
$$
Since $\mu_\lambda'$ is nonatomic and ergodic with respect to $R_\lambda$, we obtain that
$ \bigcup_{n>0}R_\lambda^{-n}A=\Bbb T\,\pmod {\mu_\lambda'}$. 
Hence $\mu'_\omega\perp\mu'_\lambda$.
\end{proof}

A subset $\Lambda\subset\Bbb T$ is called {\it independent} 
if whenever $\lambda_{1}^{k_1}\lambda_{2}^{k_2}\ldots\lambda_{n}^{k_n}=1$
for some  $k_1,\dots, k_n\in\Bbb Z$, $\lambda_1,\dots,\lambda_n\in\Lambda$ and
 $n\in\Bbb N$ 
then $k_1=\cdots=k_n=0$.

\begin{remark}
Theorems B and  C can be strengthened in the following way.
Let $\Lambda$ be a countable  independent subset of $\Bbb T$.
Then there exists
\begin{itemize}
\item
 a rigid $(C,F)$-dynamical system  $(X,\mu, T)$ with explicitly defined sequence $(C_n,F_{n-1})_{n=1}^\infty$ such that $\mu(X)=\infty$, $\# C_n=2$ for each $n\in\Bbb N$, $\Lambda\subset e(T)$ end every $\lambda$-eigenfunction is one-to-one for each $\lambda\in\Lambda$;
 \item
  a zero type  $(C,F)$-dynamical system  $(X,\mu, T)$ with explicitly defined sequence $(C_n,F_{n-1})_{n=1}^\infty$ such that $\mu(X)=\infty$,
  $\Lambda\subset e(T)$ end every $\lambda$-eigenfunction is one-to-one for each $\lambda\in\Lambda$.
 \end{itemize}
The proof of these statements  is only a slight modification of the proof of~Theorems~B and C respectively. 
The proof is based on the well known fact that for each $\epsilon>0$, there is $n\in\Bbb N$ such that
$\sup_{\lambda\in\Lambda}|1-\lambda^n|<\epsilon$.
We leave details to the reader.
\end{remark}

\end{document}